\newcommand{\markdef}[1]{\textbf{#1}}
\newcommand{\lh}{\operatorname{lh}}
\newcommand{\Gen}{\operatorname{Gen}}
\newcommand{\cat}{^\frown}
\DeclareMathOperator{\Aut}{Aut}
\DeclareMathOperator{\Add}{Add}
\DeclareMathOperator{\graph}{graph}
\newenvironment{enumerate-(a)}{\begin{enumerate}[label={\upshape (\alph*)}, leftmargin=2pc]}{\end{enumerate}}
\newenvironment{enumerate-(a)-r}{\begin{enumerate}[label={\upshape (\alph*)}, leftmargin=2pc,resume]}{\end{enumerate}}
\newenvironment{enumerate-(a)-5}{\begin{enumerate}[label={\upshape (\alph*)}, leftmargin=2pc,start=5]}{\end{enumerate}}
\newenvironment{enumerate-(A)}{\begin{enumerate}[label={\upshape (\Alph*)}, leftmargin=2pc]}{\end{enumerate}}
\newenvironment{enumerate-(A)-r}{\begin{enumerate}[label={\upshape (\Alph*)}, leftmargin=2pc,resume]}{\end{enumerate}}
\newenvironment{enumerate-(i)}{\begin{enumerate}[label={\upshape (\roman*)}, leftmargin=2pc]}{\end{enumerate}}
\newenvironment{enumerate-(i)-r}{\begin{enumerate}[label={\upshape (\roman*)}, leftmargin=2pc,resume]}{\end{enumerate}}
\newenvironment{enumerate-(I)}{\begin{enumerate}[label={\upshape (\Roman*)}, leftmargin=2pc]}{\end{enumerate}}
\newenvironment{enumerate-(I)-r}{\begin{enumerate}[label={\upshape (\Roman*)}, leftmargin=2pc,resume]}{\end{enumerate}}
\newenvironment{enumerate-(1)}{\begin{enumerate}[label={\upshape (\arabic*)}, leftmargin=2pc]}{\end{enumerate}}
\newenvironment{enumerate-(1)-r}{\begin{enumerate}[label={\upshape (\arabic*)}, leftmargin=2pc,resume]}{\end{enumerate}}
\newenvironment{itemizenew}{\begin{itemize}[leftmargin=2pc]}{\end{itemize}}
\newtheorem{theorem}{Theorem}[section]
\newtheorem{lemma}[theorem]{Lemma}
\newtheorem{corollary}[theorem]{Corollary}
\newtheorem{proposition}[theorem]{Proposition}
\newtheorem{question}[theorem]{Question}
\newtheorem{fact}[theorem]{Fact}
\newtheorem{claim}{Claim}[theorem]
\theoremstyle{definition}
\newtheorem{definition}[theorem]{Definition}
\theoremstyle{remark}
\newtheorem{remark}[theorem]{Remark}
\providecommand{\customgenericname}{}
\newcommand{\newcustomtheorem}[2]{%
  \newenvironment{#1}[1]
  {%
   \renewcommand\customgenericname{#2}%
   \renewcommand\theinnercustomgeneric{##1}%
   \innercustomgeneric
  }
  {\endinnercustomgeneric}
}
\title{Forcing, genericity, and CBERS}
\author[F.~Calderoni]{Filippo Calderoni}
\address{Department of Mathematics, Rutgers University, Hill Center for the Mathematical Sciences, 110 Frelinghuysen Rd., Piscataway, NJ 08854-8019} \email{filippo.calderoni@rutgers.edu}
\author[D.~Sinapova]{Dima Sinapova}
\address{Department of Mathematics, Rutgers University, Hill Center for the Mathematical Sciences, 110 Frelinghuysen Rd., Piscataway, NJ 08854-8019} \email{dima.sinapova@rutgers.edu}
\subjclass[2020]{03E15, 03E40}
\thanks{The research of the first author was supported by the NSF grant DMS -- 2348819. The research of the second author was supported by the NSF grant DMS -- 2246781.}
\begin{document}

\maketitle

\begin{abstract}
    In this paper we continue the study of equivalence of generics filters started by Smythe in \cite{Smy22}. We fully characterize those forcing posets for which the corresponding equivalence of generics is smooth using the purely topological property of condensation. Next we leverage our characterization to show that there are non-homogeneous forcing for which equivalence of generics is not smooth. Then we prove hyperfiniteness in the case of Prikry forcing and present some additional results addressing the problem of whether generic equivalence for Cohen forcing is hyperfinite.
\end{abstract}

\section{Introduction}

Let \(M\) be a countable transitive model of a sufficient fragment of \(\textsf{ZFC}\), and fix a partial order
\(\mathbb{P}\) in \(M\).
We can define the Polish space $\Gen(M, \mathbb{P})$ of $M$-generic filters for $\mathbb{P}$.  This space has a basis of clopen sets of the form $N_p=\{G\in \Gen(M, \mathbb{P})\mid p\in G\}$ for all $p\in \mathbb{P}\cap M$.
We say that two generic filters \(G,H\in \Gen(M, \mathbb{P})\)
are \markdef{equivalent} (in symbols, \(G \equiv_{M}^\mathbb{P} H\)) if and only if they produce the same generic extension. I.e., we define
\[
G \equiv_{M}^\mathbb{P} H\qquad \iff\qquad M[G]=M[H].
\]

The equivalence relation \(\equiv_{M}^\mathbb{P}\) was first analyzed from the point of view of the Borel complexity theory by Smythe~\cite{Smy22}.
By the definability of the forcing relations, this is a countable Borel equivalence relation on the Polish space $\Gen(M, \mathbb{P})$. (See \cite[Lemma~2.6]{Smy22}.)
The case where \(\mathbb{P}\) is the Cohen forcing \(\mathbb{C}\) is particularly compelling, and it is motivated by a long-standing open problem in descriptive set theory. Recall that a Borel equivalence relation is \markdef{hypefinite} if is the increasing union of a sequence of Borel equivalence relations with finite equivalence classes.
While \cite[Theorem~3.1]{Smy22} proves that \(\equiv_{M}^\mathbb{C}\) is an increasing union of hyperfinite equivalence relations, it is unclear whether \(\equiv_{M}^\mathbb{C}\) is hyperfinite.
This is a particular instance of a central problem in descriptive set theory: the union problem.

The union problem was first posed by Doughety, Jackson, and Kechris~\cite{DouJacKec} in their seminal work about hyperfinite equivalence relation. Some recent striking results show that certain regularity assumption such as Borel asymptotic dimension and \(u\)-amenability  provides a positive solution to the union problem. (See respectively \cite{CJMSTD} and \cite{NarVac}.) However, the union problem remains open and is entangled with another central theme in descriptive set theory: the nuance between hyperfiniteness and measure hyperfiniteness.

On the other hand, Frisch, Shinko and Vidnyanszky~\cite{FriShiVid} recently explored the scenario of the union problem being false and its complexity consequences. 

Back to generic equivalence, people have wondered whether \(\equiv_{M}^\mathbb{C}\) constitutes a counterexample to the union problem. The combinatorial nature of forcing posets suggests that one could develop new techniques to study hyperfiniteness in the more restrictive context of generic equivalence. (For a discussion about this, we refer the reader to~\cite[Page~82]{Kec24}). 

In addition, Smythe~\cite[Question~1]{Smy22} asks to determine the Borel complexity of equivalence of generics for other forcings. In this paper we continue on this theme. We primarily focus on the problem of determining smoothness and hyperfiniteness for \(\equiv^\mathbb{P}_M\).

In Section~\ref{sec : smoothness} we provide a new characterization of smoothness for \(\equiv_{M}^\mathbb{P}\). In Sections~\ref{sec : Prikry}  we prove that \(\equiv_{M}^\mathbb{P}\) is hyperfinite in the case of Prikry. In Section~\ref{sec : Cohen} we prove some additional results about genric equivalence and we discuss some equivalence relations closely related to \(\equiv^\mathbb{C}_M\). We believe that these would be useful to address the problem of finding the Borel complexity of \(\equiv^\mathbb{C}_M\) and other forcing in the future.

\section{Preliminaries}

\subsection{Forcing}

  Recall that $\mathbb{P}$ is \markdef{weakly homogeneous} if for all pairs of conditions $q,r$, there is an automorphism $e\colon\mathbb{P}\rightarrow\mathbb{P}$, such that $e(q)$ is compatible with $r$. Note that this implies that there are generic filters $G,H$, such that $q\in G, r\in H$ and $V[H]=V[G]$. Also, recall that if, on the other hand $G$ and $H$ are two generic filters for $\mathbb{P}$ with  $V[H]=V[G]$, and $\overline{\mathbb{P}}$ is the Boolean completion of the poset, then there is an  automorphism $e\colon\overline{\mathbb{P}}\rightarrow \overline{\mathbb{P}}$, such that $e"G=H$.

Weak homogeneity can be used to detect non-smoothness in the following precise sense. Recall that a poset is \markdef{atomless} if and only if every condition has incompatible extensions. The following result can be found in~\cite[Theorem~2.12]{Smy22}.

\begin{theorem}[Smythe~2022]
\label{thm : Smythe not smooth}
If $\mathbb{P}$ is atomless and weakly homogeneous, then $\equiv_{M}^\mathbb{P}$ is not smooth.
\end{theorem}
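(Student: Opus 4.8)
The plan is to show non-smoothness by exhibiting a continuous reduction from a known non-smooth equivalence relation into $\equiv_M^\mathbb{P}$. The natural candidate is the orbit equivalence relation of a generically ergodic Borel action with nontrivial classes, or more concretely $E_0$ (eventual agreement of sequences in $2^\N$) or the action of a countable group. Recall that a countable Borel equivalence relation is non-smooth precisely when it fails to admit a Borel selector, and one robust way to certify this is to build a Borel (indeed continuous) action of a countable group on $\Gen(M,\mathbb{P})$ whose orbits are contained in $\equiv_M^\mathbb{P}$-classes and which is ergodic with respect to a suitable invariant measure.

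Concretely, first I would exploit weak homogeneity to manufacture automorphisms of (the Boolean completion of) $\mathbb{P}$ that move generics while preserving the generic extension. By the recalled fact in the Preliminaries, if $e\colon\overline{\mathbb{P}}\to\overline{\mathbb{P}}$ is an automorphism then for any generic $G$ the pushforward $e"G$ is again generic and $M[e"G]=M[G]$, so $G \equiv_M^\mathbb{P} e"G$. Thus every automorphism in $M$ induces a transformation of $\Gen(M,\mathbb{P})$ that stays inside a single $\equiv_M^\mathbb{P}$-class. Second, I would use atomlessness to produce a rich enough supply of such automorphisms so that the induced action has no Borel transversal: atomlessness guarantees that below any condition there are incompatible extensions, which lets one interleave or ``flip'' coordinates and generate a free (or at least aperiodic) action of a countable group such as $\bigoplus_\N \Z/2\Z$, whose orbit equivalence relation is exactly $E_0$-like and hence non-smooth.

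The cleanest route is to equip $\Gen(M,\mathbb{P})$ with a Borel probability measure that is invariant and ergodic for this group action and quasi-invariant with full support, so that ergodicity rules out a Borel selector: a smooth countable Borel equivalence relation admits a Borel transversal, and restricted to the support of an ergodic invariant measure this would force the transversal to have measure $0$ or $1$, contradicting that classes are countable (hence null) while the whole space has measure one. Atomlessness is what ensures the measure is non-atomic and the classes are genuinely countable-but-dense, while weak homogeneity supplies the measure-preserving symmetries; together they yield ergodicity.

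The main obstacle I anticipate is verifying that the group of automorphisms coming from weak homogeneity acts \emph{ergodically} (or equivalently, generically ergodically) on $\Gen(M,\mathbb{P})$ and that the induced orbit equivalence relation is genuinely non-smooth rather than merely non-trivial. Weak homogeneity only asserts that for each pair $q,r$ there exists an automorphism sending $q$ to something compatible with $r$; upgrading this pointwise transitivity to a global ergodicity statement requires a Baire-category or measure-theoretic argument showing that the orbits are dense and that no invariant Borel set separates them nontrivially. I expect this to hinge on a genericity/Baire-category argument: one shows that for a comeager set of pairs of generics, the appropriate automorphisms can be chosen, and then invokes topological ergodicity together with the fact that $\equiv_M^\mathbb{P}$ contains these orbits to conclude non-smoothness via the standard dichotomy for countable Borel equivalence relations.
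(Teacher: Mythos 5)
Your proposal has a genuine gap, and it sits exactly where you locate the ``main obstacle.'' The route you call cleanest---an invariant ergodic probability measure on $\Gen(M,\mathbb{P})$---is unavailable: neither hypothesis produces such a measure. The space $\Gen(M,\mathbb{P})$ is Polish but not compact, the countable group $\Aut^M(\overline{\mathbb{P}})$ need not be amenable, and for a general atomless weakly homogeneous $\mathbb{P}$ there is no canonical measure at all (the natural ideals for, say, Cohen and random forcing are of completely different character); so ``equip the space with an invariant ergodic measure'' is not a step one can take, and the rest of that argument collapses with it. Likewise, the claim that atomlessness lets you generate a free action of $\bigoplus_\N \Z/2\Z$ whose orbit equivalence relation is ``$E_0$-like'' is unsupported: weak homogeneity hands you one automorphism per pair of conditions, and nothing in the hypotheses manufactures a free action of an infinite group or a continuous reduction from $E_0$. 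What remains of your proposal is the Baire-category alternative, which you defer rather than prove---but that deferred step \emph{is} the proof (it is Smythe's, which the paper cites rather than reproves). It is also short: if $e(q)$ is compatible with $r$, pick $s\leq e(q),r$; any generic $H\ni s$ lies in $N_r$ and equals $e''G$ for the generic $G=(e^{-1})''H\ni q$, so $e\cdot N_q\cap N_r\neq\emptyset$. Thus weak homogeneity gives topological transitivity, hence generic ergodicity of the $\Aut^M(\overline{\mathbb{P}})$-action; atomlessness makes $\Gen(M,\mathbb{P})$ perfect, so the countable $\equiv_M^\mathbb{P}$-classes are meager; and a smooth relation containing a generically ergodic orbit relation would have a comeager class---contradiction.

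You should also be aware that the paper's own machinery proves this theorem (and more) by a more elementary, local argument that avoids both measure and category zero-one laws. The criterion is condensation: a countable Borel orbit equivalence relation is non-smooth if and only if some point is an accumulation point of its own class (Fact~2.2), and Theorem~\ref{lemma : dagger} verifies this from the condition \eqref{dwh}, which is strictly weaker than weak homogeneity. Concretely: atomlessness plus weak homogeneity gives, densely below any $p'$, a condition $q$ and an automorphism $e\in M$ with $e(q)\perp q$ and $e(q)\leq p'$; then for any generic $G$ containing such a $q\leq p'$, the filter $H=e''G$ is a \emph{different} generic, in the same class, still inside $N_{p'}$. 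So every generic through $p'$ is condensed, and non-smoothness follows pointwise, with no ergodicity statement needed. This localization is what lets the paper extend non-smoothness to posets (like $\mathbb{P}_\ell$ in Proposition~\ref{prop : weakly homogeneous}) that are not weakly homogeneous at all---something neither your measure route nor the global ergodicity route can see.
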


In fact, Smythe proved that $\mathbb{P}$ is weakly homogeneous if and only if the action of $\Aut^M(\mathbb{P})$ on $\Gen(M, \mathbb{P})$ is generically ergodic. (See~\cite[Lemma~2.11]{Smy22}.) In Proposition~\ref{prop : weakly homogeneous}  below, we show that the sufficient condition in Theorem~\ref{thm : Smythe not smooth} is not necessary.

We will say that $\mathbb{P}$ is \markdef{densely weakly homogeneous} if for densely many $p$, $\mathbb{P}_{\leq p}$ is weakly homogeneous (i.e., the poset $\{q\in \mathbb{P}\mid q\leq p\}$).
Clearly, weak homogeneity implies densely weak homogeneity.

For an example of densely weakly homogeneous poset that is not weakly homogeneous, take a lottery sum of non isomorphic weakly homogeneous forcings.  Recall that the \markdef{lottery sum} of a collection \(\{\mathbb{P}_i:i\in I\}\) of forcing posets is the
poset \(\bigoplus_{i\in I} P_i = \{\langle  i, p\rangle : i\in I \text{ and } p \in \mathbb{P}_i\}\cup\{\mathbb{1}\}\), ordered with \(\mathbb{1}\) weaker than everything else and
\(\langle i, p\rangle \leq \langle j, q\rangle\) when \(i = j\) and \(p\leq_{\mathbb{P}_i} q\).  
Then for all $\langle i, p\rangle$, the forcing below 
 $\langle i, p\rangle$, the poset $\mathbb{P}\downarrow {\langle i, p\rangle}$ is isomorphic to \(\mathbb{P}_i\downarrow p\), which is just $\mathbb{P}_i$by homogeneity, and therefore weakly homogeneous, and so the full forcing is densely weakly homogeneous. On the other hand, since the $\mathbb{P}_i$'s are no isomorphic, the full forcing is not weakly homogeneous

\subsection{Countable Borel equivalence relations}

An equivalence relation \(E\) is countable if every \(E\)-class is countable. It is finite if every \(E\)-class is finite.

Let \(X = 2^\mathbb{N}\). Then the \markdef{eventual equality relation}
\[
x\mathbin{E}_0y\quad \iff\quad \exists m \forall n \geq m(x_n = y_n).
\]
It is immediate that \(E_0\) is hyperfinite because \(E_0 = \bigcup{i\in\mathbb{N}} F_i\), where \(x\mathbin{F_i} y \iff x_n= y_n\) for all \(n\geq i\). Clearly each \(F_i\) class has cardinality \(2^i\).
\begin{comment}
    
and the \markdef{tail equivalence relation}
\[
x\mathbin{E}_ty \quad \iff\quad \exists m \exists k\forall n(x_{m+n} = y_{k+n})
\]
are countable Borel.

We recall the following result from~\cite{DouJacKec}.

\begin{proposition}[Dougherty-Jackson-Kechris]
 The tail equivalence relation   \(\mathbin{E}_t\) is hyperfinite.
\end{proposition}
\end{comment}

Let \(\Gamma\) be a countable group with the discrete topology.
If \(X\) is a Polish space and \(a\colon \Gamma\times X \to X\) is a continuous action of \(\Gamma\) on \(X\) we say that \(X\) is a \markdef{Polish \(\Gamma\)-space}. In which case, we denote by \(\mathcal{R}(\Gamma\curvearrowright X)\) the corresponding orbit equivalence relation.
Given a Polish \(\Gamma\)-space $X$, a point $x\in X$ is \markdef{condensed} if \(x\) is an accumulation point of its orbit. That is,  for all open neighborhoods $U$ of $x$, $|U\cap[x]|\geq 2$. In the sequel, we make use of the following well-known fact. For a proof see e.g.~\cite[Proposition~2.2]{CalCla23'}.

\begin{fact}
Let \(X\) be a Polish \(\Gamma\)-space.  The orbit equivalence relation \(\mathcal{R}(\Gamma\curvearrowright X)\) is not smooth if and only if there is a condensed point.
\end{fact}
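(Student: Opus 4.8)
The plan is to prove both implications, treating the (easier) direction ``no condensed point $\Rightarrow$ smooth'' by an explicit Borel selector and the harder converse by a Baire category argument; fix once and for all a countable basis $\{U_n\}_{n\in\mathbb{N}}$ of open sets for $X$. Suppose first that no point is condensed. Then for every $x$ there is some $n$ with $U_n\cap[x]=\{x\}$, so I can set $m(x)$ to be the least $n$ such that $U_n\cap[x]$ is a singleton. Since $\Gamma$ is countable, $x\mapsto m(x)$ is Borel, because each predicate ``$U_n\cap[x]=\{y\}$'' only involves countable quantification over $\Gamma$. For each $x$ there is then a \emph{unique} point $s(x)\in[x]$ with $U_{m(x)}\cap[x]=\{s(x)\}$, and $x\mapsto s(x)$ is a Borel $E$-invariant selector. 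Hence $x\mathbin{\mathcal{R}(\Gamma\curvearrowright X)}y\iff s(x)=s(y)$, which witnesses smoothness.

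For the converse, suppose $x_0$ is condensed. Condensation is an orbit invariant: since each $\gamma\in\Gamma$ acts as a homeomorphism, every point of $[x_0]$ is an accumulation point of $[x_0]$. I then pass to the closed invariant subspace $Z=\overline{[x_0]}$, on which $\mathcal{R}(\Gamma\curvearrowright X)$ restricts to the orbit relation of the induced action. Two observations drive the argument: first, $[x_0]$ is dense in $Z$, so the action on $Z$ is topologically transitive and hence \emph{generically ergodic} (every invariant Borel set is meager or comeager); second, $Z$ has no isolated points, because every point of the dense orbit is an accumulation point of it. Thus $Z$ is a perfect Polish space, so every countable set—in particular every orbit—is meager.

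It suffices to show that $\mathcal{R}(\Gamma\curvearrowright X)\restriction Z$ is not smooth, since the restriction of a smooth relation is smooth. Suppose toward a contradiction that $f\colon Z\to Y$ is a Borel reduction to equality, with $Y$ Polish. Because $f$ is constant on orbits, $f^{-1}(U)$ is invariant for every open $U\subseteq Y$, so generic ergodicity forces $f^{-1}(U)$ to be meager or comeager. Fixing a countable basis of $Y$ and letting $\mathcal{F}=\{U: f^{-1}(U)\text{ is comeager}\}$, the members of $\mathcal{F}$ form a filter base whose intersection pins down a single value $c^{*}$, and $\bigcap_{U\in\mathcal{F}}f^{-1}(U)$ is comeager; hence $f^{-1}(\{c^{*}\})$ is comeager. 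But $f^{-1}(\{c^{*}\})$ is a single orbit, which is meager—contradicting that $Z$ is a Baire space. This establishes non-smoothness.

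I expect the main obstacle to be the contradiction in the third paragraph: verifying that generic ergodicity genuinely forces $f$ to be ``generically constant,'' i.e.\ that the basic open sets with comeager preimage shrink to a single point $c^{*}$ so that one fiber becomes comeager. The selector construction and the perfectness and transitivity observations are routine; the delicate point is combining the Baire-category behavior of the Borel reduction with generic ergodicity to extract the meager-versus-comeager contradiction.
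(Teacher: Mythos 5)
The paper does not prove this fact at all: it is stated as a known result and outsourced to \cite[Proposition~2.2]{CalCla23'}, so there is no in-paper argument to compare against. Your proof is essentially the standard one (and the one used in that reference): a Borel invariant selector from least basic open sets meeting each orbit in a singleton for the ``no condensed point $\Rightarrow$ smooth'' direction, and, for the converse, passing to the orbit closure $Z=\overline{[x_0]}$, observing that $Z$ is perfect (so orbits are meager) and that the dense orbit gives generic ergodicity via the topological zero--one law, and then deriving a contradiction with a hypothetical Borel reduction to equality. All of these steps are sound.

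The one place where your write-up is genuinely under-justified is the step you yourself flag: pinning down $c^*$. Intersecting only the sets in $\mathcal{F}=\{U \text{ basic}: f^{-1}(U)\text{ comeager}\}$ does not \emph{by itself} show that $\bigcap\mathcal{F}$ is a singleton; a filter base of open sets can easily have large intersection, and ruling that out here is exactly as hard as proving generic constancy. The standard repair also uses the meager side: let $W=\bigcup\{V_n : f^{-1}(V_n)\text{ meager}\}$, so $f^{-1}(W)$ is meager and $K=Y\setminus W$ has comeager preimage, hence $K\neq\emptyset$; every basic neighborhood of a point of $K$ has comeager (because non-meager, hence by generic ergodicity comeager) preimage, so two distinct points of $K$ would yield disjoint comeager sets in the Baire space $Z$; thus $K=\{c^*\}$ and $f^{-1}(\{c^*\})=f^{-1}(K)$ is comeager. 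With that substitution your contradiction (a comeager fiber that is a single, hence meager, orbit) goes through verbatim.
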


\section{Characterization of smoothness}
\label{sec : smoothness}

The main goal in this section is to present a characterization of non-smoothness for \(\equiv_{M}^\mathbb{P}\). (Theorem~\ref{lemma : dagger} below.)
First we spell out condensation in the case when $X$ is $\Gen(M, \mathbb{P})$ and \(\Gamma=\Aut^M(\mathbb{P})\).
Recall that \(N_p=\{G\in \Gen(M, \mathbb{P})\mid p\in G\}\). It is crucial for us that the set \(\{N_p \mid p\in \mathbb{P}\cap M\}\) is a basis for the space \(\Gen(M,\mathbb{P})\). (For the details see~\cite[Lemma~2.3]{Smy22}.) Therefore we can refer to the basic (cl)open sets \(N_p\), rather than referring to all open subset of the space.

\begin{definition}
\label{def : condensed filter}
An \(M\)-generic filter $G\in \Gen(M, \mathbb{P})$ is \markdef{condensed} if for all $N_p$ with $G\in N_p$, there is $H\neq G$, such that $M[H]=M[G]$ and $H\in N_p$. 
\end{definition}

\begin{definition} Let \(\mathbb{P}\) be a forcing poset in \(M\). For any \(p\in \mathbb{P}\) define the statement \eqref{dwh} as follows:
\begin{multline}\tag{$\dagger_p$}\label{dwh}
\text{For all $p'\leq p$, there are incompatible $q,r\leq p'$,}\\
\text{such that there exists distinct generic filters $G,H$,}\\
\text{such that $q\in G, r\in H$ and $V[H]=V[G]$\}.}
\end{multline}
\end{definition}

In other words condition~\eqref{dwh} says that we can find instances of homogeneity densely often below $p$. Clearly, densely weakly homogeneous implies  $(\dagger)_p$ for all $p$.
We will show that this is strictly weaker than being densely weakly homogeneous.

\begin{theorem}
\label{lemma : dagger}
For any forcing poset \(\mathbb{P}\), we have $\equiv_{M}^\mathbb{P}$ is not smooth if and only if \eqref{dwh} holds for some $p\in \mathbb{P}$.

\end{theorem}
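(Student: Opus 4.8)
The plan is to deduce the statement from the cited Fact about condensed points. Since two $\mathbb{P}$-generics produce the same extension exactly when one is carried to the other by an automorphism of the Boolean completion $\overline{\mathbb{P}}$ (recalled in Section~2), the relation $\equiv_{M}^\mathbb{P}$ is the orbit equivalence relation of the countable group $\Gamma=\Aut^M(\overline{\mathbb{P}})$ acting continuously on $\Gen(M,\mathbb{P})$ via $e\cdot G = e"G$ (identifying $\mathbb{P}$-generics with $\overline{\mathbb{P}}$-generics through density). By the Fact, $\equiv_{M}^\mathbb{P}$ is non-smooth if and only if there is a condensed filter in the sense of Definition~\ref{def : condensed filter}. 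So it suffices to prove that a condensed filter exists if and only if \eqref{dwh} holds for some $p$.

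The bridge is a reformulation of \eqref{dwh} as a density statement about an $M$-definable set. Let $E$ be the set of $p'\in\mathbb{P}$ admitting incompatible $q,r\le p'$ and an automorphism $e\in\Gamma$ such that $e^{-1}(r)$ is compatible with $q$. Using the correspondence of Section~2 in both directions, I would verify that $p'\in E$ exactly when there are incompatible $q,r\le p'$ carried by distinct generics $G',H'$ with $M[G']=M[H']$: given such $e$ and a common lower bound $s\le q,e^{-1}(r)$, any generic $G'\ni s$ has $q\in G'$ and $r\in e"G'=:H'$; conversely $M[G']=M[H']$ yields $e$ with $e"G'=H'$, so $e^{-1}(r)\in G'\ni q$. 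Thus $E$ is precisely the homogeneity-instance set, and crucially it is definable in $M$, so genericity arguments apply to it. Since $E$ is upward closed, the assertion ``$p'\in E$ for all $p'\le p$'' is equivalent to ``$E$ is dense below $p$'', and hence \eqref{dwh} for $p$ is equivalent to the density of $E$ below $p$.

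For the forward direction, let $G$ be condensed. For each $p'\in G$ the definition supplies $H\ne G$ with $M[H]=M[G]$ and $p'\in H$; picking $q\in G\setminus H$ and, by genericity of $H$, some $r\in H$ incompatible with $q$, then refining below $p'$ inside $G$ and inside $H$, I obtain incompatible $q_0,r_0\le p'$ witnessed by $G,H$, so $p'\in E$. Hence $G\subseteq E$. Now genericity finishes the job: the set $D'=\{s:\text{$E$ is dense below }s\}\cup\{s:\text{no }s'\le s\text{ lies in }E\}$ is dense and lies in $M$; since $G\subseteq E$ the filter avoids the second part, so it meets the first, yielding $p\in G$ with $E$ dense below $p$, i.e.\ \eqref{dwh}.

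For the converse, assume $E$ is dense below $p$ and build a condensed generic $G\ni p$ by recursion along a decreasing sequence $p=p_0\ge p_1\ge\cdots$, interleaving two steps. At even steps I meet the next dense set of $M$ to secure genericity. At odd steps $n$, density of $E$ gives $p^*\le p_n$ in $E$, hence incompatible $q,r\le p^*$ and $e\in\Gamma$ with $q$ compatible with $e^{-1}(r)$; I take $p_{n+1}\le q,e^{-1}(r)$. This forces $q\in G$ and $e^{-1}(r)\in G$, so $r\in e"G$; as $r\perp q$ the filter $H=e"G$ is distinct from $G$, satisfies $M[H]=M[G]$, and contains $p_n$ since $r\le p^*\le p_n$. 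Because the $p_n$ are cofinal in $G$ and each such $H$ is upward closed, every $p'\in G$ lies in one of these $H$'s, so $G$ is condensed. The step I expect to be the main obstacle is the reformulation of the second paragraph: correctly matching the homogeneity-instance set with the $M$-definable set $E$ through $\overline{\mathbb{P}}$ — in particular the passage between $\mathbb{P}$-generics and $\overline{\mathbb{P}}$-generics and checking compatibility can be witnessed in $\mathbb{P}$ — since it is exactly this definability that exposes $E$ to genericity in the forward direction and makes the automorphisms available for the explicit construction in the converse.
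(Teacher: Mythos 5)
Your proposal is correct, and it rests on the same two pillars as the paper's proof: the reduction of non-smoothness to the existence of a condensed filter via the cited Fact, and the translation between pairs of equivalent generics and automorphisms of the Boolean completion $\overline{\mathbb{P}}$ lying in $M$. The assembly, however, differs in both directions, and the differences are instructive. In the right-to-left direction, the paper proves that \emph{every} generic $G\ni p$ is condensed: it shows the $M$-definable set $\mathcal{D}=\{q:\exists e\in\Aut(\overline{\mathbb{P}})\cap M\ (e(q)\perp q \text{ and } e(q)\le p')\}$ is dense below each $p'\le p$, picks $q\in\mathcal{D}\cap G$, and sets $H=e''G$; your recursion interleaving the countably many dense sets of $M$ with instances of your set $E$ builds a single condensed generic by hand. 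Both unwind the same automorphism trick; the paper's packaging yields the stronger conclusion (all generics through $p$ are condensed) via one density lemma, while yours is a self-contained construction. In the left-to-right direction your argument does genuinely more work than the paper's, and this work is needed: the paper asserts that condensation of $G$ immediately gives \eqref{dwh} for every $p\in G$, but unwinding Definition~\ref{def : condensed filter} only produces incompatible witnesses below conditions $p'$ that lie in $G$, and says nothing about arbitrary $p'\le p$ outside $G$ (indeed, for a lottery sum of a weakly homogeneous forcing with a rigid one, the weakest condition lies in every generic and yet \eqref{dwh} fails for it, so the paper's ``for all $p\in G$'' is an overstatement, though its theorem is unaffected). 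Your steps --- $G\subseteq E$, the $M$-definability and upward closure of $E$, and meeting the $M$-definable dense set $D'$ to land in its first half --- constitute exactly the genericity argument that closes this step and recovers \emph{some} $p$ satisfying \eqref{dwh}. Finally, the point you flag as delicate is handled correctly: compatibility with $e^{-1}(r)$ can always be witnessed by a condition of $\mathbb{P}$ by density of $\mathbb{P}$ in $\overline{\mathbb{P}}$, the same implicit identification the paper makes when its Claim forms $q^*=e^{-1}(r^*)$.
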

\begin{proof}
For the forward direction,  if $\equiv_{M}^\mathbb{P}$ is not smooth, there is a condensed filter \(G\in\Gen(M,\mathbb{P})\). It follows that \eqref{dwh} holds for all $p\in G$. (Use Definition~\ref{def : condensed filter}.)

For the right to left direction,
assume \eqref{dwh} for some \(p\in\mathbb{P}\).  Let $G$ be a generic filter with $p\in G$.  We want to show that $G$ is condensed.   Namely,  for all \(p'\in G\) there is \(H\neq G\) with  and \(M[G] = M[H]\) and \(H \ni p'\).

Fix \(p'\in G\). By strenghtening if necessary, we may assume $p'\leq p$. First we prove the following density lemma.
\begin{proof}[Claim]
The set \(\mathcal{D} = \{ q : \exists e \in \Aut(\mathbb{P})\cap M\text{ such that }e(q) \perp q, e(q)\leq p' \}\) is dense in \(\mathbb{P}\) below  \(p'\).
\renewcommand{\qed}{}
\end{proof}
\begin{proof}
Given \(q\leq p' \leq p\) by \eqref{dwh} we can find incompatible conditions \(q', r\leq q\) such that
\[
q'\in H_1 \quad\quad r\in H_2 \quad\quad M[H_1] = M[H_2].
\]

Then there is \(e\in \Aut(\bar{\mathbb{P}})\cap M\) such that $e"H_1=H_2$. So, \(e(q')\) and \(r\) are compatible. Then let \(r^* \leq e(q'), r\) and denote by \(q^* = e^{-1}(r^*)\). It follows that \(q^*\leq q'\) and also that \(q^* \) and \(e(q^*)=r^*\) are incompatible. Then $q^*\in \mathcal{D}$.
\end{proof}

Choose \(q\in\mathcal{D}\cap G\) so that \(q\leq p'\).  Let \(e\in \Aut(\mathbb{P})\cap M\) witness that \(q\in\mathcal{D}\). By letting \(H = e'' G\) we get \(M[G] = M[H]\) and since \(q\) and \(e(q)\) are incompatible it follows that \(G,H\) are different. And since both \(q\) and \(e(q)\) are below $p'$, both $G,H$ contain $p'$, as desired.
\end{proof}

Using condensation we show the following:
\begin{proposition}
\label{prop : weakly homogeneous}
There is a poset $\mathbb{P}$ that is not weakly homogeneous,  but \(\equiv_{M}^\mathbb{P}\) is not smooth.
\end{proposition}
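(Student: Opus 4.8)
The plan is to instantiate the lottery-sum construction discussed in the Preliminaries and then certify non-smoothness through Theorem~\ref{lemma : dagger}, while certifying the failure of weak homogeneity by hand. Concretely, I would let $\mathbb{P}_0 = 2^{<\omega}$ and $\mathbb{P}_1 = 3^{<\omega}$ be the tree forcings ordered by reverse inclusion; both are atomless, weakly homogeneous, and \emph{self-similar} in the sense that the cone above any node is isomorphic to the whole tree. Set $\mathbb{P} = \mathbb{P}_0 \oplus \mathbb{P}_1 \in M$, which is countable, so that $\Gen(M,\mathbb{P})$ is Polish and $\equiv_M^\mathbb{P}$ is a countable Borel equivalence relation in the usual way. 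The statement then reduces to two points: (i) $\mathbb{P}$ is not weakly homogeneous, and (ii) \eqref{dwh} holds for some $p$, whence Theorem~\ref{lemma : dagger} delivers that $\equiv_M^\mathbb{P}$ is not smooth.

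For (ii) I would fix $p = \langle 0, \langle\rangle\rangle$ and note that the cone $\mathbb{P}_{\leq p}$ is isomorphic to $2^{<\omega}$, hence atomless and weakly homogeneous. Given any $p' \leq p$, atomlessness yields incompatible $q, r \leq p'$, and weak homogeneity (within the cone, extended to $\mathbb{P}$ by fixing the $\mathbb{P}_1$-block and $\mathbb{1}$) yields $e \in \Aut(\mathbb{P}) \cap M$ with $e(q)$ compatible with $r$; taking $w \leq e(q), r$ and a generic $H \ni w$, the filter $G = e^{-1}{}''H$ is generic with $q \in G$, $M[G] = M[H]$, and $G \neq H$ since $q \perp r$. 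This is exactly the witness demanded by \eqref{dwh}, so Theorem~\ref{lemma : dagger} applies. Equivalently, one may simply invoke the observation from the Preliminaries that this $\mathbb{P}$ is densely weakly homogeneous, which already implies \eqref{dwh} at every $p$.

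The main obstacle is (i): I must exclude \emph{every} automorphism, not only the visible ones. The key structural point is that each $e \in \Aut(\mathbb{P})$ fixes the top condition $\mathbb{1}$ and therefore permutes the two maximal cones $\{\langle 0, s\rangle : s \in 2^{<\omega}\} \cong 2^{<\omega}$ and $\{\langle 1, t\rangle : t \in 3^{<\omega}\} \cong 3^{<\omega}$ lying below $\mathbb{1}$. Since these cones are non-isomorphic as posets---in $2^{<\omega}$ every condition has exactly two immediate strengthenings while in $3^{<\omega}$ it has three, an invariant preserved by any isomorphism---$e$ cannot interchange them and must fix each setwise. Consequently, for $q = \langle 0, s\rangle$ and $r = \langle 1, t\rangle$ the image $e(q)$ stays inside the $2^{<\omega}$-cone and is thus incompatible with $r$, so no automorphism witnesses weak homogeneity for the pair $(q,r)$. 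I expect the delicate part to be phrasing the ``cones are invariant'' step cleanly: I would identify the two cones intrinsically (for instance as the two maximal proper principal ideals below $\mathbb{1}$, or via their distinct branching numbers) so that invariance under $\Aut(\mathbb{P})$ is manifest, after which the non-isomorphism of $2^{<\omega}$ and $3^{<\omega}$ immediately forbids any swap and completes the argument.
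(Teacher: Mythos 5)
Your proof is correct, but it is not the route the paper takes. You instantiate the remark from the Preliminaries: a single lottery sum $2^{<\omega}\oplus 3^{<\omega}$ of two non-isomorphic, atomless, weakly homogeneous forcings. Your verification of \eqref{dwh} (via extending cone automorphisms, or equivalently via dense weak homogeneity) is sound, and your coatom argument for the failure of weak homogeneity is exactly the detail that the paper's preliminary remark (``since the $\mathbb{P}_i$'s are not isomorphic, the full forcing is not weakly homogeneous'') leaves implicit; note your argument leans on the fact that both trees have maximum elements, so the two cones are order-theoretically definable as the cones below the two coatoms, which makes the rigidity analysis clean. The paper instead builds a more elaborate object $\mathbb{P}_\ell$: a tree-like iterated lottery sum interleaving a weakly homogeneous forcing $\mathbb{P}_0$ with a family of \emph{non}-weakly homogeneous, pairwise non-projecting forcings $\mathbb{P}^i_1$. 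The reason for the extra work is that the paper wants a strictly stronger conclusion than Proposition~\ref{prop : weakly homogeneous} as stated: $\mathbb{P}_\ell$ is not even \emph{densely} weakly homogeneous (every condition has an extension below which weak homogeneity fails), yet \eqref{dwh} still holds at every $p$, so condition \eqref{dwh} is separated from dense weak homogeneity, which is the separation announced before Theorem~\ref{lemma : dagger}. Your example cannot give that separation, since it \emph{is} densely weakly homogeneous; conversely, your argument is more elementary, fully concrete, and entirely sufficient for the proposition as stated. One small caution if you route through the paper's ``clearly densely weakly homogeneous implies \eqref{dwh}'' shortcut: that implication tacitly uses atomlessness of the homogeneous cones (a trivial cone is weakly homogeneous but yields no incompatible $q,r$); your direct argument avoids this issue because $2^{<\omega}$ is atomless.
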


Towards the proof of Proposition~\ref{prop : weakly homogeneous}, we will define \(\mathbb{P}_\ell\) such that  \(\mathbb{P}_\ell\) is not densely weakly homogeneous,  but it satisfies \eqref{dwh} for all \(p\in\mathbb{P}_\ell\).
The idea is to take lottery sums of nonisomorphic weakly homogeneous forcings in a tree like fashion. 
 Let $\mathbb{P}_0$ be a weakly homogeneous forcing poset. For ease of exposition we take a constant sequence of forcing \(\{\mathbb{P}^i_0: i\in \mathbb{N}\}\) with \(\mathbb{P}^i_0 = \mathbb{P}_0\). Let \(\{\mathbb{P}_1^i:i\in\mathbb{N}\}\) be a sequence of nonweakly homogeneous forcings such that no \(\mathbb{P}^i_1\) projects\footnote{Here \(\mathbb{P}_0\) projects onto \(\mathbb{P}_1\) means that whenever \(G\) is generic for \(\mathbb{P}_0\), one can define a generic for \(\mathbb{P}_1\) in \(V[G]\).} into \(\mathbb{P}^j_1\) for \(i\neq j\).
For nonempty \(\sigma\in 2^{<\omega}\) let \(\mathbb{P}_\sigma\) be \(\prod_{n<|\sigma|}\mathbb{P}^n_{\sigma(n)}\), so that $\mathbb{P}_{01}=\mathbb{P}_0\times\mathbb{P}^1_1, \mathbb{P}_{11}=\mathbb{P}^0_1\times\mathbb{P}^1_1,\mathbb{P}_{110}=\mathbb{P}^0_1\times\mathbb{P}^1_1\times\mathbb{P}_0$, etc.  Finally we define \(\mathbb{P}_\ell\) as follows:
Conditions in $\mathbb{P}_\ell$ are of the form $(\sigma, p)$ for some $\sigma\in 2^{<\omega}$ and \(p\in \mathbb{P}_\sigma\), and

\[(\sigma, p)\leq (\tau,q)\qquad\iff\qquad\sigma \supseteq \tau\text{ and } {p\restriction \tau} \leq_{\mathbb{P}_\tau} q.\]

\begin{lemma}
\(\mathbb{P}_\ell\) is not densely weakly homogeneous,  but \eqref{dwh} holds for all $p$.
Therefore, \(\equiv_{M,\mathbb{P}_\ell}\) is not smooth.
\end{lemma}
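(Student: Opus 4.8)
The final sentence is immediate: if \eqref{dwh} holds for every $p$ then it holds for some $p$, so Theorem~\ref{lemma : dagger} gives that $\equiv_M^{\mathbb{P}_\ell}$ is not smooth. Thus the real work is to establish the two structural assertions, and I would treat them in increasing order of difficulty.

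First, \eqref{dwh} for every $p$. Fix $p$ and an arbitrary $p'=(\tau,t)\le p$. The idea is to open a fresh \emph{weakly homogeneous} slot below $p'$ by extending the string with a $0$. Since $\mathbb{P}_0$ is weakly homogeneous (and atomless) I can choose incompatible $a,b\in\mathbb{P}_0$ together with $e\in\Aut(\mathbb{P}_0)\cap M$ such that $e(a)$ is compatible with $b$; fix $c\le e(a),b$ and put $a^{*}=e^{-1}(c)\le a$. Let $q=(\tau\cat 0,\,t\cat a)$ and $r=(\tau\cat 0,\,t\cat b)$, both below $p'$ and incompatible, since they agree on $\tau$ but carry $a\perp b$ in the new coordinate $|\tau|$. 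Now take any generic $G$ with $(\tau\cat 0,\,t\cat a^{*})\in G$, so the coordinate-$|\tau|$ part $G_{|\tau|}$ of $G$ is a $\mathbb{P}_0$-generic containing $a^{*}$. Because $e$ fixes the string and every coordinate other than $|\tau|$, it induces an automorphism of $\mathbb{P}_\ell\downarrow q$ acting only on that coordinate; letting $H=e''G$ I get $M[H]=M[G]$ (as $e\in M$), while $e(a^{*})=c\le b$ forces $r\in H$. Since $q\in G$ and $r\in H$ are incompatible with $M[G]=M[H]$, this is exactly \eqref{dwh} below $p'$, and $p'\le p$ was arbitrary.

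Next, that $\mathbb{P}_\ell$ is not densely weakly homogeneous. I fix $p^{*}=(\langle 1\rangle,s)$ with $s\in\mathbb{P}_1^{0}$ and argue that $\mathbb{P}_\ell\downarrow p$ is not weakly homogeneous for \emph{every} $p\le p^{*}$; this shows that the weakly homogeneous conditions are not dense. Any such $p=(\upsilon,u)$ has $\upsilon(0)=1$ and $u(0)\le s$, so below $p$ the $0$-th coordinate is permanently governed by the non-weakly-homogeneous factor $\mathbb{P}_1^{0}$. Here I use the reformulation recorded in the Preliminaries: if $\mathbb{P}_\ell\downarrow p$ were weakly homogeneous, then any two conditions below $p$ would be joined by generics with the same extension. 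Taking the $\mathbb{P}_1^{i}$ to be \emph{nowhere} weakly homogeneous, I fix witnesses $x\perp y$ below $u(0)$ that cannot be linked by any pair of $\mathbb{P}_1^{0}$-generics inducing the same model, and lift them to $q=(\upsilon,u[0\mapsto x])$ and $r=(\upsilon,u[0\mapsto y])$ below $p$. Weak homogeneity would then yield generics $G\ni q$ and $H\ni r$ with $M[G]=M[H]$; reading off the $0$-th coordinates produces $\mathbb{P}_1^{0}$-generics $G_0\ni x$ and $H_0\ni y$, both lying in the common model $M[G]=M[H]$.

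The crux, and the step I expect to be the main obstacle, is to upgrade ``$G_0,H_0$ live in the same model'' to ``$M[G_0]=M[H_0]$,'' which contradicts the choice of $x,y$. This is exactly where the rigidity hypotheses on the type-$1$ factors enter. Writing $M[G]=M[G_0][G^{\mathrm{rest}}]$, where $G^{\mathrm{rest}}$ packages the branch together with all coordinate generics other than coordinate $0$, the forcing producing $G^{\mathrm{rest}}$ is a (tree-)product of copies of $\mathbb{P}_0$ and of the factors $\mathbb{P}_1^{j}$ with $j\neq 0$; by the assumption that no $\mathbb{P}_1^{j}$ projects into $\mathbb{P}_1^{0}$ (and that $\mathbb{P}_0$ adds no $\mathbb{P}_1^{0}$-generic), the passage from $M[G_0]$ to $M[G_0][G^{\mathrm{rest}}]$ adds no new $\mathbb{P}_1^{0}$-generic over $M$. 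Hence the $\mathbb{P}_1^{0}$-generic $H_0$ already lies in $M[G_0]$, and symmetrically $G_0\in M[H_0]$, so $M[G_0]=M[H_0]$, the desired contradiction. Making this independence step precise, via a mutual-genericity analysis pinning down every $\mathbb{P}_1^{0}$-generic appearing in $M[G]$, is the technical heart of the argument; once it is in place, no automorphism of $\mathbb{P}_\ell\downarrow p$ can render $q$ and $r$ compatible, so $\mathbb{P}_\ell\downarrow p$ fails to be weakly homogeneous for every $p\le p^{*}$, as required.
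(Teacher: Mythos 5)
Your strategy coincides with the paper's on both halves: \eqref{dwh} is witnessed by extending the string with a $0$ and exploiting weak homogeneity of $\mathbb{P}_0$, and failure of dense weak homogeneity is witnessed by a type-$1$ factor together with the claim that generics producing the same model must have coordinate projections producing the same model. Your treatment of \eqref{dwh} is a correct, more detailed version of the paper's one-sentence argument (the automorphism acting only on the fresh coordinate, then $H=e''G$).

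The divergence is in where you plant the non-homogeneous witnesses, and it matters. The paper, given an \emph{arbitrary} condition $(\sigma,p)$, extends the string by $1$ and places the fixed witnesses $q_1,r_1\in\mathbb{P}^{|\sigma|}_1$ in the fresh coordinate $|\sigma|$; since that coordinate is unconstrained, only the stated hypothesis that each $\mathbb{P}^i_1$ is not weakly homogeneous is needed, and the argument shows that \emph{no} cone of $\mathbb{P}_\ell$ is weakly homogeneous. You instead descend below $p^*=(\langle 1\rangle,s)$ and modify the already-frozen coordinate $0$, so your witnesses must be found below an arbitrary $u(0)\le s$; this is exactly why you are forced to strengthen the hypothesis to the $\mathbb{P}^i_1$ being \emph{nowhere} weakly homogeneous. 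As written, then, your argument proves the lemma for a modified construction rather than for the $\mathbb{P}_\ell$ defined in the paper, whose factors are only assumed non-weakly-homogeneous; the repair is precisely the paper's move of opening a fresh $1$-coordinate, which works below every condition with no extra assumptions. Finally, the step you single out as the crux --- upgrading ``$G_0,H_0$ lie in the same model'' to ``$M[G_0]=M[H_0]$'' --- is dispatched by the paper in a single sentence (``the projections of $G,H$ on the $|\sigma|$ coordinate must produce the same model''), so your sketch via the no-projection hypothesis and mutual genericity is, if anything, more explicit than the paper's; note, though, that what is actually needed is that the product of \emph{all} remaining coordinates adds no $\mathbb{P}^0_1$-generic over $M$, which is formally stronger than the stated pairwise non-projection hypothesis, and neither your sketch nor the paper closes that distance in detail.
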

\begin{proof}
Since \(\mathbb{P}^i_1\) are not weakly homogeneous, we can find conditions \(q_1,r_1\in \mathbb{P}^i_1\) such that for all \(\mathbb{P}^i_1\)-generics \(G_1\ni q_1\) and \(H_1\ni r_1\) we have \(V[G_1]\neq V[H_1]\).
It follows that the poset $\mathbb{P}_\ell$ is not weakly homogeneous because any condition $(\sigma,p)$ can be extended to incompatible conditions $({\sigma\cat 1}, p\cat q_1)$ and \(({\sigma\cat 1}, p\cat r_1)
\). Then if \(G,H\) are \(\mathbb{P}_\ell\)-generic containing $({\sigma\cat 1}, p\cat q_1)$ and \(({\sigma\cat 1}, p\cat r_1)\) respectively, and \(V[G]=V[H]\), then  the projections of \(G,H\) on the \(|\sigma|\) coordinate must produce the same model.

On the other hand,  any condition $(\sigma,p)$ can be extended to conditions  $({\sigma\cat 0}, q)$, $({\sigma\cat 0},r)$ such that \(q,r\in\mathbb{P}_{\sigma\cat 0}\) and  $q\upharpoonright |\sigma|=r\upharpoonright |\sigma|=p$.  Using homogeneity of $\mathbb{P}_0$ we can find generics containing these conditions that produce the same extension.  Therefore,  we conclude that \(\mathcal{R}(\Aut(\mathbb{P}_\ell)\curvearrowright \Gen(\mathbb{P}_\ell, M))\) is not smooth by Lemma~\ref{lemma : dagger},  and neither \(\equiv_{M,\mathbb{P}_\ell}\) is.
\end{proof}

\section{Prikry is hyperfinite}
\label{sec : Prikry}

Let $\kappa$ be a measurable cardinal and fix a normal measure $U$ on $\kappa$ (i.e. $U$ is a $\kappa$-complete non principal normal ultrafilter on $\kappa$). In this section $\mathbb{P}$ will be the a Prikry forcing at $\kappa$ with respect to $U$. I.e. conditions are of the form $(s, A)$, where $s$ is a finite increasing sequence of points in $\kappa$ and $A\in U$. We will refer to set in $U$ as ``measure one". For $(s, A)\in\mathbb{P}$, we call $s$ the stem of the condition, and $|s|$ is the length. We say $q$ is a direct extension of $p$, denoted $q\leq^* p$ if $q\leq p$ and they have the same stem.

Let $M$ be a countable model satisfying enough of \textsf{ZFC}, containing $\kappa$ and $\mathbb{P}$.

A generic filter for $\mathbb{P}$ induces a generic Prikry sequence $\vec{\alpha}=(\alpha_n: n<\omega)$ cofinal in $\kappa$, and it is actually defined from it as follows: given such a sequence $\vec{\alpha}$, define $G$ as the set of conditions $( s, A)$, such that for some $n$, $s = ( \alpha_0, \dotsc, \alpha_{n-1})$ and for all $i\geq n$,  $\alpha_i\in A$.  We say that $\vec{\alpha}$ is a \markdef{generic sequence} if the filter induced from it is generic. We will be identifying generic filters with the corresponding generic Prikry sequences.

By identifying $M$ with its transitive collapse, we can assume that \(M\) is transitive without any loss of generality.

\begin{theorem}
\label{thm : Prikry}
$\equiv_{M}^\mathbb{P}$ is hyperfinite.
\end{theorem}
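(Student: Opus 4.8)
The plan is to exploit the identification of generic filters with generic Prikry sequences and reduce $\equiv_{M}^\mathbb{P}$ to the hyperfinite relation $E_0$. Concretely, I would send each generic $G$ to the \emph{range} $R_G=\{\alpha_n:n<\omega\}\subseteq\kappa$ of its Prikry sequence $\vec{\alpha}$. Since $M$ is countable, $\kappa$ is a countable ordinal externally, so $R_G$ realizes $\Gen(M,\mathbb{P})$ as a Borel subset of $2^\kappa\cong 2^\mathbb{N}$, and the increasing enumeration shows that $G$ is recoverable from $R_G$. The goal is then the characterization
\[
M[\vec{\alpha}]=M[\vec{\beta}]\quad\Longleftrightarrow\quad R_\alpha\mathbin{\triangle}R_\beta\text{ is finite,}
\]
so that $\equiv_{M}^\mathbb{P}$ is exactly the restriction of almost-equality (i.e.\ $E_0$) to the image of $\Gen(M,\mathbb{P})$. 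Once this is established, I finish with the standard fact that a Borel subequivalence relation of a hyperfinite countable Borel equivalence relation is again hyperfinite: if $F=\bigcup_n F_n$ with each $F_n$ finite and Borel, then $\equiv_{M}^\mathbb{P}=\bigcup_n(\equiv_{M}^\mathbb{P}\cap F_n)$ exhibits it as an increasing union of finite Borel equivalence relations.

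The left-to-right implication of the characterization is the substantive half; the converse is soft and I would record it first. If $R_\alpha\mathbin{\triangle}R_\beta$ is finite, then above some $\gamma<\kappa$ the two ranges agree, and each of the finitely many discrepancies is a single ordinal below $\kappa$, hence an element of the transitive model $M$. Thus the finite ``correction'' from $R_\alpha$ to $R_\beta$ is coded by a parameter in $M$, giving $R_\beta\in M[\vec{\alpha}]$ and symmetrically $R_\alpha\in M[\vec{\beta}]$, whence $M[\vec{\alpha}]=M[\vec{\beta}]$. For the forward direction, assume $M[\vec{\alpha}]=M[\vec{\beta}]$; then $\vec{\beta}\in M[\vec{\alpha}]$ and $\vec{\alpha}\in M[\vec{\beta}]$, so it suffices to show that $R_\beta\setminus R_\alpha$ is finite (the other inclusion being symmetric). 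I would first dispose of the bounded part cleanly: $\vec{\beta}$ is $U$-generic over $M$, so by the Mathias-style characterization of genericity it is eventually contained in every measure-one set of $M$; hence if $R_\beta\setminus R_\alpha$ were infinite and \emph{bounded} in $\kappa$, its complement would be measure one (by non-principality and $\kappa$-completeness of $U$), forcing $\vec{\beta}$ to omit all but finitely many of its points, a contradiction. So an infinite $R_\beta\setminus R_\alpha$ would have to be cofinal in $\kappa$, and enumerating it increasingly would produce a cofinal $U$-generic sequence $\vec{\delta}\in M[\vec{\alpha}]$ with $R_\delta\cap R_\alpha=\varnothing$.

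The heart of the argument, and the main obstacle, is therefore the \emph{almost-containment lemma}: $M[\vec{\alpha}]$ contains no cofinal $U$-generic sequence disjoint from $\vec{\alpha}$ (equivalently, every $U$-generic sequence lying in $M[\vec{\alpha}]$ is almost contained in $R_\alpha$). This is the Prikry analogue of Mathias's theorem that a Mathias-generic real in an extension is almost contained in the generic real, and the naive strategy of deciding each $\delta_n$ by a direct extension fails: pinning an ordinal below $\kappa$ would require intersecting $\kappa$-many measure-one sets, overflowing $\kappa$-completeness, which is precisely why the generic sequence is genuinely new. The correct approach combines the Prikry property with \emph{normality} of $U$: given a name $\dot{\delta}$ and a condition $(s,A)$ forcing $\dot{\delta}$ to be cofinal, $U$-generic, and disjoint from the generic sequence, one uses the partition (Rowbottom-type) consequences of normality to thin $A$ to a measure-one set that is ``homogeneous'' for the way stems commit to $\dot{\delta}$-points, and then derives that on a measure-one set the would-be $\delta$-points are forced to coincide with potential $\alpha$-points, contradicting disjointness and the genericity of $\vec{\alpha}$. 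I expect this lemma to require the most care, and in the write-up I would either carry out this Prikry-property-plus-normality diagonalization in full or invoke the corresponding minimality result for Prikry generics from the literature. Granting it, applying the lemma to $\vec{\beta}\in M[\vec{\alpha}]$ and to $\vec{\alpha}\in M[\vec{\beta}]$ yields $R_\beta\subseteq^* R_\alpha$ and $R_\alpha\subseteq^* R_\beta$, hence $R_\alpha\mathbin{\triangle}R_\beta$ finite, completing the characterization and the proof that $\equiv_{M}^\mathbb{P}$ is hyperfinite.
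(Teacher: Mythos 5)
Your proposal is correct, and at its mathematical core it is the same proof as the paper's. Everything rests on the characterization (Fact~\ref{fact : Prikry}\ref{fact : Prikry(1)}) that $M[\vec{\alpha}]=M[\vec{\beta}]$ holds exactly when the two sequences differ finitely, and your plan for proving it is the one the paper executes: reduce to the case of a sequence in $M[\vec{\alpha}]$ disjoint from $\vec{\alpha}$ (your ``almost-containment lemma''), then rule that out by combining the Prikry property with normality. Concretely, the paper's Claim uses a pressing-down (Fodor) argument to produce a direct extension $(h,A)$ so that each value $\delta_s$ decided for $\dot{\beta}_n$ by a stem $s$ from $A$ satisfies $\delta_s>\max(s)$; since infinitely many $\beta_n$ land in $A$ (genericity of the ambient sequence), some condition in the generic filter decides $\dot{\beta}_n=\delta_s=\beta_n\in A$ with $\delta_s$ above the stem, so the stem can be extended by $\delta_s$, forcing $\dot{\beta}_n$ onto the Prikry sequence and contradicting forced disjointness. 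This is precisely the ``homogeneity for how stems commit to $\delta$-points'' diagonalization you sketch. Be aware, though, that this lemma is where all the work of the theorem lives: a complete write-up must either carry out that argument in full, as the paper does, or cite Gitik--Kanovei--Koepke, which the paper indicates is a legitimate alternative.

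Where you genuinely diverge is the endgame, and your version is cleaner and in fact more robust. The paper enumerates $\kappa\cap M$ as $(\gamma_n)$ and defines explicit finite relations $E_n$ requiring $x_k=y_k$ for all $k\geq n$, with earlier entries drawn from $\{\gamma_i\mid i\leq n\}$; you instead pass to ranges, so that $\equiv_M^{\mathbb{P}}$ becomes the restriction of $E_0$ on $2^{\kappa\cap M}$ to a Borel set, and conclude by closure of hyperfiniteness under restriction and pullback along an injective Borel map. Besides brevity, the range formulation handles a point the paper's same-index relations miss: two generic sequences can give the same extension while being index shifts of one another (delete $\alpha_0$ from $\vec{\alpha}$; the model is unchanged since $\alpha_0\in M$, but the two sequences agree at no common index), and such pairs lie in no $E_n$ as defined in the paper, so the asserted equality $\equiv_M^{\mathbb{P}}=\bigcup_n E_n$ really needs the finite-symmetric-difference-of-ranges formulation that you use. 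One small repair on your side: in the bounded case you apply the Mathias criterion to the complement of $R_\beta\setminus R_\alpha$, but that set need not belong to $M$, so the criterion does not apply to it directly; fortunately no such argument is needed, since an increasing $\omega$-sequence cofinal in $\kappa$ has only finitely many terms below any $\gamma<\kappa$, so every bounded subset of $R_\beta$ is automatically finite.
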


We will use the following fact about Prikry forcing.
\begin{fact}
\label{fact : Prikry}
\begin{enumerate-(1)}
\item
\label{fact : Prikry(1)}
If $\vec{\alpha}$ and $\vec{\beta}$ are both $\mathbb{P}$-generic sequences over $M$, then $M[\vec{\alpha}]=M[\vec{\beta}]$ if and only if $\vec{\alpha}$ and $\vec{\beta}$ coincide on a tail.
\item
\label{fact : Prikry(2)}
(The Prikry lemma). For any condition $p$ and a dense open set $D$, there is a direct extension $q\leq^*p$ and $k<\omega$, such that every $r\leq q$ of length $k$ is in $D$.
\item
Prikry forcing is weakly homogeneous. In particular, for any $\mathbb{P}$-generic $G$ over $M$ and any condition $p\in M$, there is a $\mathbb{P}$-generic $H$ over $M$, such that $p\in H$ and $M[G]=M[H]$.

\end{enumerate-(1)}
\end{fact}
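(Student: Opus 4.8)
The final statement is the three-part Fact~\ref{fact : Prikry} about Prikry forcing, and the goal is to prove each clause. I would organize the proof as three separate arguments, treating part~\ref{fact : Prikry(2)} (the Prikry lemma) first, since it is the core combinatorial engine that the other two parts will invoke. For the Prikry lemma I would fix a condition $p=(s,A)$ and a dense open set $D$, and construct the desired direct extension by a diagonal/fusion argument. Specifically, for each stem $t$ extending $s$ (with entries drawn from $A$), I would use the $\kappa$-completeness and normality of $U$ to decide whether the conditions of the form $(t,B)$ can be directly extended into $D$: partition the relevant one-point extensions according to whether they lead into $D$, and use that $U$ is an ultrafilter to pick a measure-one homogeneous set for each such decision. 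Intersecting these measure-one sets over all the finitely-many-at-each-level stems and diagonalizing (taking the measure-one set $A^* = \triangle_\alpha A_\alpha$ so that below any point only the later constraints matter) yields a single $q=(s,A^*)\leq^* p$ with the property that membership in $D$ is decided purely by the length of the extension. Then, since $D$ is dense and a direct extension can always be further extended, one shows there is a uniform length $k$ at which every $r\leq q$ with $|r|$-stem of length $k$ lands in $D$.

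\textbf{Part~\ref{fact : Prikry(1)}, the tail-equivalence characterization.} For the backward direction, if $\vec\alpha$ and $\vec\beta$ agree on a tail, then each is obtainable from the other by prepending/modifying finitely many points, so each is definable in the model generated by the other together with the finite discrepancy, giving $M[\vec\alpha]=M[\vec\beta]$. The forward direction is the substantive one: assuming $M[\vec\alpha]=M[\vec\beta]$, I would argue that both sequences enumerate the same eventual ``club-like'' trace on $\kappa$. The key point is that from a single generic one can reconstruct, in the generic extension, exactly which cofinal $\omega$-sequences could have produced this same model; and any two such sequences, being cofinal in $\kappa$ with $M$ computing the measure $U$ correctly, must share a tail because the Prikry generic is essentially canonical modulo finite initial segments. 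Concretely, one uses that $\vec\beta\in M[\vec\alpha]$ is captured by a name, applies the Prikry lemma to decide the value of each $\beta_n$ by a direct extension, and deduces that beyond some point $\beta_n$ is forced to equal $\alpha_n$; symmetry then gives tail-equality.

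\textbf{Part (3), weak homogeneity.} Given conditions $q=(s,A)$ and $r=(t,B)$ with $|s|=|t|$, I would define an automorphism of $\mathbb{P}$ that swaps the two stems by permuting the underlying ground-model points, extending this to act on the measure-one sets via the induced permutation; when the stems have unequal length one first directly extends the shorter stem so the lengths match and then permutes. Since $U$ is invariant under the relevant ground-model permutations on a measure-one set (or one passes to a measure-one set on which the permutation is well-behaved), this map sends conditions to conditions and witnesses that any two stems can be made compatible, which is exactly weak homogeneity. The ``in particular'' clause then follows from the general fact recalled in the preliminaries: weak homogeneity yields, for any generic $G$ and any condition $p$, a generic $H\ni p$ with $M[G]=M[H]$, obtained by pushing $G$ forward under the automorphism that moves a condition of $G$ to one compatible with $p$.

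\textbf{Main obstacle.} The hardest step will be the forward direction of part~\ref{fact : Prikry(1)}: extracting tail-equality of $\vec\alpha$ and $\vec\beta$ from mere equality of the generic extensions. This requires carefully using a name for $\vec\beta$ in $M[\vec\alpha]$ together with repeated applications of the Prikry lemma to pin down the values $\beta_n$ by direct extensions, and then arguing that the only freedom is in a finite initial segment. Managing the interaction between the two generics, and ruling out the possibility that the sequences differ infinitely often while still generating the same model, is the delicate part and is where the completeness and normality of $U$ must be used most carefully.
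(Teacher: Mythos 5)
Your sketch of part~\ref{fact : Prikry(1)} --- the only part of the Fact for which the paper actually supplies a proof; parts~\ref{fact : Prikry(2)} and (3) are quoted as standard --- has a genuine gap at exactly the step you flag as the main obstacle, and the one concrete assertion you make there is false. No direct extension can decide the value of $\dot\beta_n$: that value depends on the generic sequence itself, so the Prikry lemma only gives a direct extension $p'=(h,A)\leq^* p$ and a $k$ such that every length-$k$ stem extension decides $\dot\beta_n$, and the decided value $\delta_s$ varies with the stem $s$. The entire difficulty is to control where these values $\delta_s$ lie, and your outline contains no mechanism for this; ``deduce that beyond some point $\beta_n$ is forced to equal $\alpha_n$'' is a restatement of the goal, not an argument (and it even misstates the conclusion, since tail agreement permits an index shift). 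The paper's proof runs as follows: assume toward a contradiction that the symmetric difference is infinite; extract a subsequence $\vec\beta$ of $\vec\gamma$ disjoint from $\vec\alpha$; note $\vec\beta\in M[\vec\alpha]$ and fix $p$ in the filter of $\vec\alpha$ forcing $\dot{\vec\beta}$ to be disjoint from the generic sequence. The key combinatorial claim is that if $k$ is minimal such that some $p'\leq^* p$ has all its $k$-step extensions deciding $\dot\beta_n$, then on a measure-one set every decided value satisfies $\delta_s>\max(s)$. This is where measurability and normality enter: color length-$k$ stems according to whether $\delta_s<\max(s)$ or $\delta_s>\max(s)$ and homogenize; if the first alternative were homogeneous, then for each $t$ of length $k-1$ the map $\alpha\mapsto\delta_{t\cat\alpha}$ is regressive, so Fodor plus a diagonal intersection stabilizes the decided value, making $k-1$ steps suffice and contradicting minimality of $k$. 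Finally, since the generic sequence $\vec\gamma$ eventually lies inside the measure-one set $A$ of such a condition in the filter of $\vec\alpha$, some $\beta_n=\delta_s\in A$ with $\delta_s>\max(s)$; extending the stem by $\delta_s$ yields a condition forcing $\dot\beta_n$ onto the Prikry sequence, contradicting the choice of $p$. Without this dichotomy-and-contradiction structure, or a substitute for it, your plan for part~\ref{fact : Prikry(1)} does not go through.

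Two secondary points. In part (3), your stem-swapping map is not an automorphism of $\mathbb{P}$: applying a finite permutation $\pi$ of $\kappa$ to stems and re-sorting destroys the end-extension order --- e.g.\ if $\pi$ swaps $10$ and $20$, then $(5,10,15)$ extends $(5,10)$, but the images $(5,15,20)$ and $(5,20)$ are not related by end-extension. The standard repair is a cone isomorphism: for stems $s,t$ of equal length and $A\in U$ lying above $\max(s)$ and $\max(t)$, the map $(s\cat v, C)\mapsto (t\cat v, C)$ is an isomorphism of $\mathbb{P}\downarrow(s,A)$ onto $\mathbb{P}\downarrow(t,A)$, which is what yields the ``in particular'' clause and weak homogeneity at the level of the Boolean completion. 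In part~\ref{fact : Prikry(2)}, there are $\kappa$-many stems of each length, not ``finitely many at each level,'' so the homogenization must be organized by diagonal intersection and normality throughout; your appeal to $\triangle_\alpha A_\alpha$ is the right device, but the count as written is wrong. Since the paper treats parts~\ref{fact : Prikry(2)} and (3) as known facts, these issues are less central than the gap in part~\ref{fact : Prikry(1)}, but they would need to be fixed in a self-contained write-up.
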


Fact~\ref{fact : Prikry}\ref{fact : Prikry(1)} implicitly follows from the results~\cite{GitKanKoe}.
Since this work is unpublished, for completeness we provide a proof below.

\begin{proof}
One direction is clear. For the other direction, suppose for contradiction that $\vec{\alpha}$ and $\vec{\gamma}$ are two generic Prikry sequences over $M$, such that $M[\vec{\alpha}]=M[\vec{\gamma}]$, but the symmetric difference of these sequences is infinite. It follows that there is a subsequence of $\vec{\gamma}$, which is disjoint from $\vec{\alpha}$. Let $\vec{\beta}$ enumerate this subsequence. Note that $\vec{\beta}\in M[\vec{\alpha}]$.

Let $p$ in the generic filter induced by $\vec{\alpha}$, force that $\dot{\vec{\beta}}$ is disjoint from the generic Prikry sequence. 

For each $n$, let $D_n$ be the dense set of conditions deciding a value for $\dot{\beta}_n$. By the Prikry lemma, there is a direct extension $p'\leq^*p$, and $k<\omega$, such that every $k$-step extension of $p'$ is in $D_n$. I.e., for every $r\leq p'$ with length $\lh(r)=\lh(p)+k$, there is $\delta<\kappa$, such that $r\Vdash \delta=\dot{\beta}_n$.

\begin{claim}
Fix $n$, and suppose that $k$ is minimal such that the above holds for some $p'\leq^*p$ and $D_n$, and denote $p' = (h, A)$. Then there is a measure one set $B\subset A$, such that for all increasing $s$ in $B^k$, if $\delta_s$ is the unique such that $(h\cat s, B\setminus \max(s)+1)\Vdash\delta=\dot{\beta}_n$, we have that $\max(s)<\delta_s$.
\end{claim}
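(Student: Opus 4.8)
The plan is to fix the minimal $k$ together with the direct extension $p' = (h,A) \leq^* p$ it produces, and to study the function $f$ sending an increasing $s \in A^k$ to the unique $\delta_s$ with $(h\cat s, A\setminus(\max(s)+1)) \Vdash \delta_s = \dot{\beta}_n$. First I would record two preliminary facts. (i) The value $f(s)$ is unchanged if $A$ is shrunk to any measure one $B\subseteq A$: the condition $(h\cat s, B\setminus(\max(s)+1))$ extends $(h\cat s, A\setminus(\max(s)+1))$, and both have length $\lh(p)+k$ and lie below $p'$, hence both are in $D_n$ and decide the same value. This reconciles my $f$ with the $\delta_s$ of the statement, which is computed with $B$. (ii) The ordinal $\delta_s$ is never an entry of the stem $h\cat s$, since $(h\cat s,\cdot)\leq p$ forces $\dot{\beta}_n$ to be disjoint from the generic Prikry sequence, and every entry of $h\cat s$ belongs to that sequence. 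In particular $\delta_s \neq \max(s)$, so it suffices to rule out $\delta_s < \max(s)$ on a measure one set.

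Next I would color each increasing $s \in A^k$ according to whether $\delta_s < \max(s)$ or $\delta_s > \max(s)$. Since a normal measure has the Ramsey (Rowbottom) property, there is $B \in U$ with $B\subseteq A$ homogeneous for this $2$-coloring. If the homogeneous color is ``$>$'', then $B$ witnesses the claim, so the whole content is to show that the color ``$<$'' cannot occur.

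So assume $f(s) < \max(s) = s_{k-1}$ for all increasing $s \in B^k$. Then, with the first $k-1$ coordinates fixed, $f$ is regressive in its last coordinate, so by normality (applying the pressing-down lemma coordinatewise and then gluing the resulting measure one sets together by a nested diagonal intersection, using $\kappa$-completeness) there are $B_1 \in U$, $B_1\subseteq B$, and a function $g$ on increasing $(k-1)$-tuples with $f(s) = g(s\restriction(k-1))$ for all increasing $s \in B_1^k$; that is, $\delta_s$ is independent of $s_{k-1}$. I would then read this back into the forcing: for any increasing $t \in B_1^{k-1}$, every one-point extension of $(h\cat t, B_1\setminus(\max(t)+1))$ forces $\dot{\beta}_n = g(t)$, so $(h\cat t, B_1\setminus(\max(t)+1)) \Vdash \dot{\beta}_n = g(t)$, and therefore every $(k-1)$-step extension of the direct extension $(h,B_1)\leq^* p$ lies in $D_n$. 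This contradicts the minimality of $k$, finishing the argument.

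The main obstacle will be the canonical-form reduction in the third paragraph: eliminating the top coordinate \emph{uniformly}. The coordinatewise pressing-down argument yields, for each initial $(k-1)$-tuple $t$, a measure one set on which $f(t\cat\,\cdot)$ is constant, and these must be amalgamated into a single $B_1\in U$ so that for \emph{every} increasing $s\in B_1^k$ the last coordinate lands in the good set attached to $s\restriction(k-1)$; getting this diagonal-intersection bookkeeping exactly right is the delicate point. By contrast, the Ramsey reduction and the final translation into a statement about $(k-1)$-step extensions contradicting the minimality of $k$ are routine once the reduction is in place.
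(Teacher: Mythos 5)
Your proposal is correct and follows essentially the same route as the paper's own proof: a two-valued coloring of increasing $k$-tuples made homogeneous by the Rowbottom property of the normal measure, followed by pressing down (Fodor) in the last coordinate for each fixed $(k-1)$-tuple, a diagonal intersection to amalgamate the resulting measure one sets, and finally the observation that the amalgamated set makes every $(k-1)$-step extension decide $\dot{\beta}_n$, contradicting the minimality of $k$. Your preliminary facts (i) and (ii) are worthwhile explicit additions (the paper leaves the shrinking-invariance of $\delta_s$ implicit and only notes $\delta_s \notin s$), but they do not change the substance of the argument.
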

\begin{proof}
Suppose otherwise. Since $p'$ forces that $\dot{\beta}_n$ is not on the Prikry sequence, each $\delta_s\notin s$.  Let $\phi\colon[A]^k\rightarrow \{0,1\}$ be the function defined as follows
\[
\phi(s) = 
\begin{cases}
0 & \text{if $\max(s)<\delta_s$}\\
1 & \text{if $\max(s)>\delta_s$}.
\end{cases}
\]
By measurability, $\phi$ is homogeneous on a measure one set, namely, there is $B\subset A$, $B\in U$, such that for all $k$, $\phi\upharpoonright [B]^k$ is constant. So, for all $k$-length stems $s$ taken from $B$, $\delta_s<\max(s)$. Then for each $t\in [B]^{k-1}$, the function $g_t\colon \alpha\mapsto \delta_{t\cat\alpha}$ is regressive. By Fodor's Theorem (e.g., see~\cite[Theorem~8.7]{Jec}, there is a measure one set $B_t\subset B$ and $\delta_t<\kappa$, such that for all $s$ of the form $t\cat\alpha$ with $\alpha\in B_t$, $\delta_s=\delta_t$. Let $B'=\triangle B_t = \{\alpha\mid \alpha\in\bigcap_{\max(t)<\alpha} B_t\}$. Then if $r\leq (h, B')$ is any condition with length $\lh(p)+k-1$; denote $r = (h\cat t, C)$. We may assume that $\max(t)<\min(C)$.  Then $C\subset B_t$, and so $r\Vdash \dot{\beta}_n=\delta_t$. Contradiction with the minimality of $k$.
\end{proof}

Using the above claim and inductively applying the Prikry lemma\footnote{cf. Fact~\ref{fact : Prikry}\ref{fact : Prikry(2)}} for each $n$, we can find a condition $p' = (h, A)\leq^*p$, and a sequence $( k_n: n<\omega)$, such that:
\begin{enumerate}
\item
for each $n$, every $k_n$-step extension of $p'$ decides the value of $\dot{\beta}_n$,
\item
for each stem $s$ of points in $A$ of length $k_n$, setting $\delta_s$ to be such that $(h\cat s, A\setminus \max(s)+1)\Vdash\dot{\beta}_n=\delta_s$, we have that $\delta_s>\max(s)$.

\end{enumerate}

Now, let $G$ be the generic filter induced by $\vec{\alpha}$.
Since we can do this densely often, we can find such a condition in  $G$. Now go back to $M[G]$. Let $n$, be such that $\beta_n\in A$. Let $q\leq p$ be the (unique) weakest $k_n$-step extension of $p$ in $G$. Denoting $q = (h\cat s, A\setminus \max(s)+1)$, we have that $q\Vdash\dot{\beta}_n=\delta_s$. Since $q\in G$, then $\delta_s=\beta_n\in A$. And since $\max(s)<\delta_s$, we can further extend $q$ to $r = (h\cat {s\cup\{\delta\}}, A\setminus \delta+1)$. But then $r$ forces that $\dot{\beta}_n$ is on the Prikry sequence. Contradiction with our initial assumption.
\end{proof}

\begin{proof}[Proof of Theorem~\ref{thm : Prikry}]
Since \(M\) is countable, we identify \((\kappa\cap M)^\omega\) with the Baire space. Roughly, speaking we encode \(\equiv^\mathbb{G}_M\) into a hyperfinite equivalence relation on the standard Borel space of \(\mathcal{G}\) of generic Prikry sequences in \((\kappa\cap M)^\omega\). Precisely, since
there is a one-to-one Borel function $f\colon\Gen(M,\mathbb{P})\rightarrow (\kappa\cap M)^\omega$ associating to any filter the corresponding generic Prikry sequence, let \(\mathcal{G} = f'' Gen(M,\mathbb{P})\) which is injective image of a Borel set, therefore Borel.

Working in $V$, enumerate $M\cap\kappa$ by $(\gamma_n: n<\omega)$.
Define $E_n$ as follows: for two $M$-generic Prikry sequences $\vec{x}$ and $\vec{y}$ in \(\mathcal{G}\), let $\vec{x}\mathbin{E}_n\vec{y}$ if and only if 
\begin{itemize}
\item
$x_k=y_k$, for all $k\geq n$, and
\item
$\vec{x}=\vec{y}$ or for all $k<n$, $x_k, y_k\in \{\gamma_i\mid i\leq n\}$.

\end{itemize}
Then each $E_n$ is a finite equivalence relation. In fact, \(|[\vec{x}]_{E_n}| \leq n!\). Moreover,  $ E_n\subset E_{n+1}$, and $\equiv_{M}^\mathbb{P} = \bigcup_n E_n$. So $\equiv_{M}^\mathbb{P}$ is hyperfinite, and in particular $\equiv_{M}^\mathbb{P}\sim_B E_0$. 
\end{proof}

\section{Towards hyperfiniteness via mutual genericity}

\label{sec : Cohen}

Let $M$ be the ground model, and let $\mathbb{P}\in M$ be a poset.  Let $X=\Gen(M,\mathbb{P})$ be the set of $M$-generics for $\mathbb{P}$. Note that $X\notin M$, but $X\subset 2^{\mathbb{P}}$ and $2^{\mathbb{P}}\in M$.
Throughout this section, we analyze the equivalence relation $\equiv_{M}^\mathbb{P}$ on $X$ and discuss the interplay between invariant uniformization and hyper-hyperfiniteness. (Invariant uniformization of equivalence relations is also largely discussed in a recent paper by Kechris and Wolman~\cite{KecWol}.) Moreover, we show that a weak form of invariant uniformization implies hyperhyperfiniteness for \(\equiv^\mathbb{P}_M\). (See Proposition~\ref{prop : weak uniformization}.)

We simplify our notation by letting \(E \) be $\equiv_{M}^\mathbb{P}$.

Following the analysis in Smythe,  we have that $E$ is generated by the countable group $\Aut^M(\overline{\mathbb{P}})$, where $\overline{\mathbb{P}}$ is the Boolean algebra completion of $\mathbb{P}$. (E.g., see \cite[Theorem~2.16]{Smy22}.)
Let $\{\gamma_n\mid n<\omega\}$ enumerate the automorphisms in $\Aut^M(\overline{\mathbb{P}})\).   For each $n$, let $\Gamma_n$ be the group generated by  $\{\gamma_i\mid i\leq n\}$.
Let $E_{\Gamma_n}$ be the induced orbit equivalence relation on $2^{\mathbb{P}}$. (We note that the full \(2^{\overline{\mathbb{P}}}\) may not be a Polish space, so we take the restriction just to $\mathbb{P}$.) Note that while the full enumeration of the automorphisms is not in $M$, for each $n$, both \(\Gamma_n\) and \(E_{\Gamma_n}\) are in \(M\).  Moreover $E_{\Gamma_n}\subseteq E_{\Gamma_{n+1}}$ and $E=\bigcup_n E_{\Gamma_n}$.

Let $\{I_n\mid n<\omega\}\in M$ be an increasing family of infinite subsets of $\omega$, such that $\bigcup_n I_n=\omega$ and $I_{n+1}\setminus I_n$ is infinite.
Find Borel involutions $\{g_k\mid k<\omega\}$ on $2^{\mathbb{P}}$ (possibly with repetitions), such that for each $n$, $\{g_k\mid k\in I_n\}\in M$ and $E_{\Gamma_n} = \bigcup_{k\in I_n}\text{graph}g_k$. We do this by applying Feldman-Moore for $E_{\Gamma_n}$ in $M$ for each $n$. Then, also $E = \bigcup_k \text{graph}g_k$.

Next we define finite Borel equivalence relations \(E_s\), for \(s\in \omega^{<\omega}\), so that if $s\subseteq s'$, then $E_s\subseteq E_s'$ and each $E_s\subset E$ with a cascade argument as in \cite[Theorem~12.1]{KecMil}.
More precisely, we first fix a Borel linear ordering $<$ on $2^{\mathbb{P}}$. Then, for each $n<\omega$, let $f_k(x) = \min_{<}\{x, g_k(x)\}$. And for $s=\langle s_0, \dotsc, s_{l-1}\rangle\in\omega^{<\omega}$,  set
\[x \mathbin{E_s} z\qquad\iff\qquad f_{s_{l-1}}\circ \dotsb \circ f_{s_1} \circ f_{s_0}(x)=f_{s_{l-1}}\circ \dotsb \circ f_{s_1} \circ f_{s_0}(z).\]
Note that each $E_s\in M$ and the sequence $\langle E_s\mid s\subset I_n\rangle\in M$ for each $n\in \omega$.

Next, fix $n$ and let $z\mathbin{E_{\Gamma_n}} x$ and $s\in I_n^{<\omega}$. Let
\[x' = f_{s_{l-1}}\circ \dotsb \circ f_{s_1} \circ f_{s_0}(x)\qquad\text{ and }\qquad z' = f_{s_{l-1}}\circ \dotsb \circ f_{s_1} \circ f_{s_0}(z)\] where $l=|s|$. Then $x' \mathbin{E_{\Gamma_n}} z'$, and so  $g_k(x')=z'$, for some $k\in I_n$.  Let $s'=s\cat k$. Then we have that $z\mathbin{E_{s'}}x$. Take say the mimimun such $s'$ and denote it by $s^n_{x,z}$. Note that the function  $(x,z,s)\mapsto s^n_{x,z}$ is in \(M\).

Now, for any $x,z$ such that $x\mathbin{E} z$, and $s\in \omega^{<\omega}$, let $s'_{x,z}=s^n_{x,z}$, where $n$ is the least such that $z\mathbin{E_{\Gamma_n}} x$ and $s\subset I_n$. Of course, the map  $\Phi\colon (x,z,s)\mapsto s'_{x,z}$ is not in $M$, however, $\Phi$ restricted to $s\in I_n^{<\omega}$ and $x,z$ with $z\mathbin{E_{\Gamma_n}} x$ is in $M$.

For $x,z$, such that $x\mathbin{E}z$, let $D_{x,z}=\{s'_{x,z}\mid s\in \omega^{<\omega}\}$. This is a dense subset of the poset $(\omega^{<\omega}, \supseteq)$ (which can be identified with $\Add(\omega, 1)$). And define: $$C_{x,z}=\bigcup_{s\in \omega^{<\omega}}\mathcal{N}_{s'_{x,z}}.$$
Here we use the notation $\mathcal{N}_s:=\{y\in\omega^\omega\mid s\sqsubset y\}$. Clearly, $C_{x,z}$ is dense open. For each $x$, let $C_x = \bigcap_{z,w\in[x]_E}C_{w,z}$. Then $C_x$ is comeager in $\omega^\omega$. Also, for each $n$ and $x,z$, such that $xE_{\Gamma_n}z$, define $$C^n_{x,z}=\bigcup_{s\in I_n^{<\omega}}\mathcal{N}_{s^n_{x,z}}.$$
Furthermore, for each $x$, let $C^n_x = \bigcap_{z,w\in[x]_{E_{\Gamma_n}}}C^n_{w,z}$. Note that $C^n_x$ is comeager in $I_n^{<\omega}$.

We will say that $y\in\omega^\omega$ is \textbf{\(N\)-generic} for some model \(N\), if it is generic for the poset $(\omega^{<\omega},\supseteq)$ over \(N\).
Recall that $X=\Gen(M,\mathbb{P})$ is the the set of $M$-generics for some poset $\mathbb{P}$.

\begin{definition}
     We say that $f\colon X\to\omega^\omega$ is a {\bf generic  \(\equiv^\mathbb{P}_M\)-invariant uniformization} if and only if
     \begin{enumerate-(i)}
    \item 
    $f(x)$ is $M[x]$-generic for all \(x\in X\); and 
    \item
    \(f\) is a \(\equiv^\mathbb{P}_M\)-invariant uniformization; i.e., \(x\equiv^\mathbb{P}_My\implies f(x)=f(y)\).
     \end{enumerate-(i)}
\end{definition}

\begin{theorem}
\label{thm : invariant function}
If there is a generic \(\equiv^\mathbb{P}_M\)-invariant uniformization \(g\colon X\to\omega^\omega\), then the equivalence relation $\equiv ^\mathbb{P}_M$ is hyperhyperfinite.
\end{theorem}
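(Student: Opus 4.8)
The plan is to use the invariant uniformization $g$ to attach to each $E$-class a single branch through $(\omega^{<\omega},\supseteq)$ that is generic over the local model, and then thread the finite cascade relations $E_s$ along that branch, so that $E$ is exhibited as an increasing union of hyperfinite pieces. The two features of $g$ that make this work are the following. Since $g$ is $\equiv^\mathbb{P}_M$-invariant it is constant on each $E$-class, so for $x\mathbin{E}z$ we have $g(x)=g(z)$; and since $M[x]=M[z]$ in that case, the common value is simultaneously $M[x]$- and $M[z]$-generic for $\Add(\omega,1)$. Thus $g$ furnishes, uniformly and $E$-invariantly, a branch generic over the local model $M[x]$ — something a single real fixed in $V$ could not provide, since the models $M[x]$ vary with $x$ and may themselves contain it.

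Next I would run the genericity argument through the sets already constructed. Fix $n$. For $z\mathbin{E_{\Gamma_n}}x$ the dense open set $C^n_{x,z}=\bigcup_{s\in I_n^{<\omega}}\mathcal{N}_{s^n_{x,z}}$ is coded in $M[x]$, because the map $(x,z,s)\mapsto s^n_{x,z}$ lies in $M$ and $x,z\in M[x]$; and since the $E_{\Gamma_n}$-class of $x$ is just the $\Gamma_n$-orbit, which is computable in $M[x]$ from $\Gamma_n\in M$, the comeager set $C^n_x=\bigcap_{w,z\in[x]_{E_{\Gamma_n}}}C^n_{w,z}$ is a countable intersection of dense open sets coded in $M[x]$. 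Genericity of $g(x)$ then gives $g(x)\in C^n_x$, so for every $z\mathbin{E_{\Gamma_n}}x$ there is $m$ with $g(x)\restriction m = s^n_{x,z}$ and $z\mathbin{E_{s^n_{x,z}}}x$; that is, the cascade along the branch $g(x)$ eventually merges $x$ and $z$.

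With this in hand I would set $F_n=\{(x,z): z\mathbin{E_{g(x)\restriction n}}x\}$, using that $E_s\subseteq E$ (so $g(x)=g(z)$ whenever $F_n$ relates them, which makes $F_n$ symmetric and transitive). Because $E_s\subseteq E_{s'}$ for $s\subseteq s'$, the $F_n$ are increasing; because each $E_s$ is finite, every $F_n$-class is finite; and the genericity consequence above gives $\bigcup_n F_n=E$. This exhibits $E$ as an increasing union of hyperfinite relations, i.e.\ as hyperhyperfinite. An equivalent and perhaps cleaner packaging is to work on $X\times\omega^\omega$ with the Borel hyperfinite relation $(x,y)\mathbin{F}(z,w)\iff y=w\text{ and }\exists n\,(z\mathbin{E_{y\restriction n}}x)$ and to observe that $x\mapsto(x,g(x))$ reduces $E$ to $F$.

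The main obstacle is precisely the gap between this and outright hyperfiniteness: the hypothesis supplies a uniformization $g$ but does not assert it is Borel, so the per-class branches need not cohere into a single Borel object, and the relations $F_n$ (equivalently, the reduction $x\mapsto(x,g(x))$) are not known to be Borel. This is why the conclusion is the weaker notion of hyperhyperfiniteness rather than hyperfiniteness, and the delicate points to verify are (i) that the comeager witnesses $C^n_x$ really are coded in $M[x]$, so that $M[x]$-genericity of $g(x)$ applies, and (ii) that a single generic branch sweeps up an entire $E_{\Gamma_n}$-class, which is where the countability of $M$ and the constancy of $g$ on classes are used.
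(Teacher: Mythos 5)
Your overall strategy---thread the cascade relations along the branch $g(x)$ and use genericity over $M[x]$ to see that the branch eventually merges any two $E$-related points---is the same in spirit as the paper's, but your execution fails at the genericity step, and the failure is exactly what dictates the shape of the paper's proof. You apply $M[x]$-genericity of $g(x)$ to the sets $C^n_{x,z}=\bigcup_{s\in I_n^{<\omega}}\mathcal{N}_{s^n_{x,z}}$, calling them dense open. They are not dense in $(\omega^{<\omega},\supseteq)$: every $s^n_{x,z}$ is a nonempty sequence all of whose entries lie in $I_n$, so $C^n_{x,z}$ is disjoint from $\mathcal{N}_{\langle j\rangle}$ for any $j\notin I_n$ (the paper itself only asserts that $C^n_x$ is comeager \emph{in} $I_n^{<\omega}$, i.e., relative to that subposet). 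Worse, since ``some entry lies outside $I_n$'' defines a dense set belonging to $M$, a generic branch $g(x)$ escapes $I_n^{<\omega}$ after finitely many coordinates, so nothing forces $g(x)\in C^n_x$. The sets that \emph{are} dense in the full poset---the global $D_{x,z}$, equivalently $\{s\in\omega^{<\omega} : z\mathbin{E_s}x\}$ for your single global cascade system---are not in $M[x]$, because the full sequence of involutions $\langle g_k : k<\omega\rangle$ (equivalently, the order-type-$\omega$ enumeration of $\Aut^M(\overline{\mathbb{P}})$) exists only in $V$; the paper stresses precisely this when it notes that $\Phi\colon(x,z,s)\mapsto s'_{x,z}$ is not in $M$. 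So each candidate dense set has either density in the forcing or membership in $M[x]$, never both, and your union claim $\bigcup_n F_n=E$ (equivalently, that $x\mapsto(x,g(x))$ reduces $E$ to $F$) is unjustified.

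A red flag you should have caught: if your argument worked, the $F_n$ would be finite Borel equivalence relations increasing to $E$, i.e., you would have proved $E$ \emph{hyperfinite}, strictly more than the theorem asserts. The paper's fix is to run a separate cascade construction for each $n$: re-enumerate the involutions generating $E_{\Gamma_n}$ by all of $\omega$ (this sequence is in $M$), build $E^n_s$ for $s\in\omega^{<\omega}$ from that enumeration, so that $D^n_{x,z}$ is genuinely dense in $(\omega^{<\omega},\supseteq)$ and lies in $M[x]$; genericity then yields $x\mathbin{E^n_{g(x)}}z$ whenever $x\mathbin{E_{\Gamma_n}}z$. Setting $x\mathbin{E_n}z$ iff $x\mathbin{E}z$ and $x\mathbin{E^n_{g(x)}}z$, each $E_n$ is hyperfinite (an increasing union over $k$ of the finite relations given by $g(x)\restriction k$), the $E_n$ increase, and $E=\bigcup_n E_n$---hence hyper-hyperfinite. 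The two indices $(n,k)$ cannot be merged into one increasing sequence of finite relations, because the level-$n$ and level-$(n+1)$ cascades come from unrelated enumerations; that obstruction is the union problem itself. Finally, your diagnosis that the conclusion is weak because $g$ need not be Borel is off: the paper's notion of uniformization is the Borel one (compare Proposition~\ref{prop : weak uniformization}, where Borelness is explicit), and without it the $E_n$ would not even witness hyper-hyperfiniteness. The weakening to hyper-hyperfinite comes from the level-by-level stratification just described, not from any definability defect of $g$.
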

\begin{proof}
Let \(E\) denote \( \equiv ^\mathbb{P}_M\) and  suppose that $g$ is such a function.

%Define $f\colon \Gen(M, \mathbb{P})\rightarrow \Gen(M, \mathbb{P})\times \omega^\omega$ by $f(x)=(x, g(x))$. Then $f$ witnesses that $E^\mathbb{P}_M$ is Borel reducible to $E^*$ defined by \[(x,y)\mathbin{E^*} (x', y')\qquad\iff\qquad x\mathbin{E}x',\; y=y'\;\text{and } x\mathbin{E_y} x'\].
For each $n$ and $y\in\omega^\omega$, let $E^n_y$ be defined as above, with respect to the Borel involutions generating $E_{\Gamma_n}$.  Now, define $E_n$ by setting \[x\mathbin{E_n}z\qquad\iff\qquad x\mathbin{E}z\quad\text{and}\quad xE^n_{g(x)} z.\] Then $E_n$ is a hyperfinite relation because \(E_n= \bigcup_k F_{n,k}\) where \(x\mathbin{F}_{n,k} z \iff x\mathbin{E} z \text{ and } x\mathbin{E}^n_{g(x)\restriction k}z\).

Moreover, if  $x\mathbin{E_{\Gamma_n}}z$, then since $g(x)$ is $M[x]=M[z]$-generic and $D^n_{x,z}\in M[x]$, we have that $x\mathbin{E^n_{g(x)}}z $. It follows that $E^\mathbb{P}_M=\bigcup_n E_n$, and for each $n$, $E_n\subseteq E_{n+1}$. 
\end{proof}

\begin{corollary}
\label{cor : Random}
    For any model \(M\), there is no generic \(\equiv^\mathbb{B}_M\)-invariant uniformization, when \(\mathbb{B}\) is the random forcing. 
\end{corollary}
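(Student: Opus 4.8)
The plan is to prove Corollary~\ref{cor : Random} by contradiction, combining Theorem~\ref{thm : invariant function} with a fundamental obstruction specific to random forcing: the failure of hyperfiniteness-compatible invariant uniformization arising from the \textbf{measure-theoretic} (as opposed to category-theoretic) nature of $\mathbb{B}$. First I would suppose toward a contradiction that a generic $\equiv^\mathbb{B}_M$-invariant uniformization $g\colon X\to\omega^\omega$ exists. By Theorem~\ref{thm : invariant function} this would immediately yield that $\equiv^\mathbb{B}_M$ is hyperhyperfinite. So the strategy reduces to showing that $\equiv^\mathbb{B}_M$ is in fact \emph{not} hyperhyperfinite, or more directly, that the very existence of such a $g$ violates a measure-preservation property of the random-forcing generic equivalence relation.

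The key structural point I would exploit is that random forcing is weakly homogeneous and atomless, so by Theorem~\ref{thm : Smythe not smooth} the relation $\equiv^\mathbb{B}_M$ is non-smooth; more importantly, the action of $\Aut^M(\overline{\mathbb{B}})$ on $X=\Gen(M,\mathbb{B})$ is generically ergodic and preserves the canonical measure that $X$ inherits as a $G_\delta$ full-measure subset of the random reals. The crux is that a map $g$ sending each $x$ to an $M[x]$-generic \emph{Cohen} real for $(\omega^{<\omega},\supseteq)$ — which is what clause~(i) of the definition demands — cannot be $\equiv^\mathbb{B}_M$-invariant and Borel simultaneously. Concretely, I would argue that such a $g$ would define, in a measure-one/Borel fashion, a way to read off a Cohen-generic real from each random-generic filter in an equivalence-invariant way; but Cohen genericity over $M[x]$ is a \emph{comeager} notion while the ambient structure on $X$ is governed by a measure, and the two are mutually orthogonal. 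The invariance under the full group $\Aut^M(\overline{\mathbb{B}})$ together with generic ergodicity should force $g$ to be constant on a comeager-or-measure-one set, contradicting that $g(x)$ must be genuinely $M[x]$-generic (hence avoid every fixed dense set coded in $M[x]$, in particular the fixed value).

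I expect the main obstacle to be making precise the orthogonality argument: one must carefully distinguish the topology on $X$ (for which the orbit equivalence relation is generically ergodic, yielding comeager-constancy of invariant Borel maps) from the measure on $X$ (under which random reals live), and then show that the Cohen-genericity requirement on $g(x)$ is incompatible with comeager-constancy. The cleanest route I would attempt is: invoke generic ergodicity of $\Aut^M(\overline{\mathbb{B}})\curvearrowright X$ (via \cite[Lemma~2.11]{Smy22}) to conclude that any Borel $\equiv^\mathbb{B}_M$-invariant map into a Polish space is constant on a comeager set $Y\subseteq X$; fix the constant value $y_0\in\omega^\omega$; then choose $x\in Y$ and observe that $y_0=g(x)$ must be $M[x]$-generic, but the singleton-avoidance built into genericity (the set $\{y_0\}$ is nowhere dense and coded in $M[x]$) forces $g(x)\neq y_0$, a contradiction. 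The delicate step is ensuring $Y$ can be taken to meet $X$ in a point where the genericity hypothesis genuinely applies, i.e.\ reconciling the comeagerness of $Y$ with the structure of $X$ itself; I would handle this by noting that $X$ is comeager in the relevant Polish space of potential generics and that generic ergodicity operates precisely on this comeager set, so $Y\cap X\neq\emptyset$ and indeed $Y\cap X$ is comeager in $X$.
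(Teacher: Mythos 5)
The first half of your proposal coincides with the paper's proof: assume a generic \(\equiv^\mathbb{B}_M\)-invariant uniformization \(g\) exists and apply Theorem~\ref{thm : invariant function} to conclude that \(\equiv^\mathbb{B}_M\) is hyper-hyperfinite. But the paper finishes with a measure-theoretic fact: by \cite[Theorem~4.4]{Smy22} the relation \(\equiv^\mathbb{B}_M\) is not Fr\'echet amenable, while every hyper-hyperfinite relation is Fr\'echet amenable (hyperfinite implies amenable); contradiction. Your proposal replaces this step with a Baire-category argument via generic ergodicity, and that is exactly where it breaks down.

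Generic ergodicity does give that \(g\) is constant, say with value \(y_0\), on a comeager set \(Y\subseteq X=\Gen(M,\mathbb{B})\), but the concluding contradiction fails, twice over. First, to rule out that \(y_0\) is \(M[x]\)-generic by "singleton avoidance" you need the dense set \(\{s:s\not\sqsubset y_0\}\) to lie in \(M[x]\), i.e.\ you need \(y_0\in M[x]\); for comeager many \(x\in X\) this is false, since the category forcing of the space \(X\) is the countable atomless poset \(\mathbb{B}\cap M\), so a sufficiently generic \(x\) is generic over a countable model \(N\) containing \(M\) and \(y_0\), and then \(y_0\in M[x]\) would force \(y_0\in M\), contradicting that \(y_0\) is \(M\)-generic (which it is, being a value of \(g\)). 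Second, and fatally, the statement you want is simply not true: for any fixed \(y_0\) that is Cohen-generic over \(M\), the set of \(x\in X\) such that \(y_0\) \emph{is} Cohen-generic over \(M[x]\) is comeager in \(X\). Indeed, for each \(\mathbb{B}\)-name \(\tau\in M\) for a dense subset of \(\omega^{<\omega}\) and each condition \(p\), the set \(S_{\tau,p}=\{s:\exists q\leq p\ (q\Vdash \check{s}\in\tau)\}\) is a dense subset of \(\omega^{<\omega}\) lying in \(M\); since \(y_0\) is \(M\)-generic it meets \(S_{\tau,p}\), which makes \(\{x\in X: y_0 \text{ meets } \tau[x]\}\) dense open, and one intersects over the countably many names in \(M\). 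So a constant map already satisfies both clauses of the definition on an invariant comeager set, and no contradiction can be extracted from category alone: Cohen genericity over \(M[x]\) is itself a comeagerly-true condition on \(x\). The genuine obstruction for random forcing is measure-theoretic---the non-amenability witnessed on the natural measure carried by the random generics---which is precisely what the paper's appeal to \cite[Theorem~4.4]{Smy22} imports and what any purely category-theoretic argument is blind to.
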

\begin{proof}
The equivalence relation \(\equiv^\mathbb{B}_M\) is not Fr\'echet amenable by \cite[Theorem~4.4]{Smy22}.
Since every hyperfinite equivalence relation is amenable, every hyper-hyperfinite equivalence relation is Fr\'echet amenable (e.g., see~Proposition~9.11(v) of~\cite{Kec24}).
    Therefore the corollary is an immediate consequence of Theorem~\ref{thm : invariant function}.
    \end{proof}

It turns out that a somewhat weaker assumption is enough to get the conclusion of  Theorem~\ref{thm : invariant function}. Denote \(E_0(\mathbb{N})\) be the equivalence relation of eventaul equality on \(\omega^\omega\). It is well known that \(E_0(\mathbb{N})\sim_BE_0\), therefore \(E_0(\mathbb{N})\) is hyperfinite. (E.g., see Gao~\cite[Proposition~6.1.2]{Gao}.)

\begin{proposition}
\label{prop : weak uniformization}
Suppose that $f\colon X\to \omega^\omega$ is a Borel function such that,
\begin{enumerate}
\item
$f(x)$ is \(M[x]\)-generic, for all \(x\in X\);
\item
$x\equiv^\mathbb{P}_M y$ implies $({f(x)},{f(y)})\in E_0(\mathbb{N})$.

\end{enumerate}
Then $\equiv^\mathbb{P}_M$ is hyperhyperfinite.

\end{proposition}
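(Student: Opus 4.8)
The plan is to re-run the cascade construction from the proof of Theorem~\ref{thm : invariant function}, but to repackage its output as a Borel reduction of each $E_{\Gamma_n}$ to eventual equality. This isolates the single place where genuine invariance of the uniformization was used and lets the weaker $E_0$-hypothesis take over there. Since $E=\bigcup_n E_{\Gamma_n}$ is an increasing union, it suffices to prove that each $E_{\Gamma_n}$ is hyperfinite; then $\equiv^{\mathbb P}_M$ is an increasing union of hyperfinite equivalence relations, i.e.\ hyperhyperfinite.

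Fix $n$. For $x\in X$ and $k<\omega$ let $\theta^n_k(x)$ denote the $k$-th stage of the level-$n$ cascade along the branch $f(x)$, namely the composition of the maps $f_j$ with $j\in I_n$ dictated by the first $k$ entries of $f(x)$ (as in the definition of $E^n_s$), applied to $x$; put $\Phi_n(x)=\langle\theta^n_k(x):k<\omega\rangle\in X^\omega$. This is Borel, each $\theta^n_k(x)$ lies in $[x]_{E_{\Gamma_n}}$, and $k\mapsto\theta^n_k(x)$ is $<$-nonincreasing. I claim that $\Phi_n$ reduces $E_{\Gamma_n}$ to the relation $\mathbb E$ of eventual equality on $X^\omega$. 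One direction is immediate: if $\Phi_n(x)\mathbin{\mathbb E}\Phi_n(z)$, then for large $k$ the common value $\theta^n_k(x)=\theta^n_k(z)$ is $E_{\Gamma_n}$-related to both $x$ and $z$, so $x\mathbin{E_{\Gamma_n}}z$.

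The substantive direction is that $x\mathbin{E_{\Gamma_n}}z$ implies $\Phi_n(x)\mathbin{\mathbb E}\Phi_n(z)$, and this is exactly where hypothesis~(2) enters. Let $m$ be least with $f(x)(i)=f(z)(i)$ for all $i\ge m$. The finite sequences $f(x)\restriction m$ and $f(z)\restriction m$ are hereditarily finite, hence lie in $M[x]=M[z]$, so the points $a_x=\theta^n_m(x)$ and $a_z=\theta^n_m(z)$ both belong to $M[x]$ and are $E_{\Gamma_n}$-related. The common tail $\sigma=f(x)\restriction[m,\omega)=f(z)\restriction[m,\omega)$ is $M[x]$-generic for $(\omega^{<\omega},\supseteq)$, since a tail of a Cohen-generic real is again Cohen-generic and the discarded finite stem lies in $M[x]$. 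Now apply the density lemma behind $D^n_{a_x,a_z}$ from the construction above: it is a dense subset of the poset lying in $M[x]$, so genericity of $\sigma$ produces some $k\ge m$ with $\theta^n_k(x)=\theta^n_k(z)$. Finally, for every $k'\ge k$ we have $f(x)(k')=f(z)(k')$, so the two cascades apply the same map at each subsequent stage and the equality $\theta^n_{k'}(x)=\theta^n_{k'}(z)$ persists; hence $\Phi_n(x)\mathbin{\mathbb E}\Phi_n(z)$.

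Since eventual equality on $X^\omega$ is hypersmooth, it is hyperfinite and so $\mathbb E\le_B E_0$ by Dougherty--Jackson--Kechris~\cite{DouJacKec}; composing with $\Phi_n$ gives $E_{\Gamma_n}\le_B E_0$, so each $E_{\Gamma_n}$ is hyperfinite, and the conclusion follows. The one new idea relative to Theorem~\ref{thm : invariant function}, and the step I expect to be the main obstacle, is dealing with the finitely many coordinates on which $f(x)$ and $f(z)$ disagree: invariance previously forced the two cascades to coincide from stage $0$, whereas here one must argue that they need only coincide from the agreement point of the two branches onward, and that the shared tail $\sigma$ is still generic over $M[x]$ so that the collapsing density argument goes through for $a_x,a_z$ rather than for $x,z$ directly.
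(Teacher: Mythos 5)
Your construction takes a genuinely different route from the paper's. The paper never shows that the relations $E_{\Gamma_n}$ themselves are hyperfinite; instead it fixes a partition $\{I_n \mid n<\omega\}\in M$ of $\omega$ into infinite sets and builds finite relations $F^n_k$ that require \emph{both} the cascade collapse guided by $f(z)\restriction I_n$ \emph{and} literal agreement $f(x)\restriction I_{n'}=f(z)\restriction I_{n'}$ for all $n'\geq n$; the finitely many coordinates where $f(x)$ and $f(z)$ disagree are absorbed into the low blocks $I_{n'}$, $n'<n$, and the hyperfinite relations $\bigcup_k F^n_k$ (which are strictly smaller than $E_{\Gamma_n}$) increase with union $\equiv^\mathbb{P}_M$. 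You instead handle the disagreements by starting the comparison of the two cascades at the agreement point $m$ and using the fact that a tail of a Cohen generic over $M[x]$ is still generic; this yields the cleaner intermediate claim that each $E_{\Gamma_n}$ is Borel reducible, via $\Phi_n$, to eventual equality $\mathbb{E}$ on $X^\omega$. That reduction argument is correct: $a_x,a_z\in M[x]$ and $a_x\mathbin{E_{\Gamma_n}}a_z$, the set $D^n_{a_x,a_z}$ is dense open and lies in $M[x]$, an initial segment of $\sigma$ landing in it forces $\theta^n_{m+l}(x)=\theta^n_{m+l}(z)$, and the collapse persists at all later stages. (One bookkeeping point: for ``the maps dictated by the entries of $f(x)$'' to make sense, the level-$n$ family of involutions must be indexed by all of $\omega$, as in the paper's per-level Feldman--Moore families $\{g^n_i\mid i<\omega\}\in M$, not by $I_n$.)

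However, your final step is false as stated: for uncountable $X$, eventual equality on $X^\omega$ is Borel bireducible with $E_1$, the eventual equality relation on $(2^\omega)^\omega$, which is hypersmooth but \emph{not} hyperfinite; indeed $E_1\not\leq_B E_0$, and $E_1$ is not reducible to any countable Borel equivalence relation (Kechris--Louveau). The implication ``hypersmooth $\Rightarrow$ hyperfinite'' from \cite{DouJacKec} is valid only for relations with countable classes, which $\mathbb{E}$ does not have. Fortunately the fix is immediate and uses exactly the theorem you cite: since $\Phi_n$ reduces the \emph{countable} Borel equivalence relation $E_{\Gamma_n}$ to $\mathbb{E}$, pulling back the filtration of $\mathbb{E}$ by the smooth relations ``the sequences agree from coordinate $k$ on'' exhibits $E_{\Gamma_n}$ as an increasing union of smooth equivalence relations, i.e.\ $E_{\Gamma_n}$ is hypersmooth; by Dougherty--Jackson--Kechris a countable hypersmooth Borel equivalence relation is hyperfinite. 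With that substitution, your proof goes through: each $E_{\Gamma_n}$ is hyperfinite and $\equiv^\mathbb{P}_M=\bigcup_n E_{\Gamma_n}$ is an increasing union, hence hyperhyperfinite.
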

\begin{proof}
Let $f$ be such a function. As before, construct $E_{\Gamma_n}$, for $n<\omega$,  such that each $E_{\Gamma_n}\in M$, and $E$ is their increasing union. Fix a partition $\{I_n\mid n<\omega\}$ in $M$ of $\omega$ into infinite sets.

For each $n$, let $\{g^n_i\mid i<\omega\}\in M$ be Borel involutions, so that $E_{\Gamma_n}$ is the union of their graph; and for $s\in\omega^{<\omega}$, we define $x\mathbin{E^{\Gamma_n}_s}z$ iff $g_s(x)=g_s(z)$, where $g_s$ is defined as in the cascade argument from as in \cite[Theorem~12.1]{KecMil} mentioned above. Similarly, define $$D^{\Gamma_n}_{x,z} = \{s\mid x\mathbin{E^{\Gamma_n}_s}z\}.$$ We have that this set is dense open for all $x,z$ such that $x\mathbin{E_{\Gamma_n}}z$.

For $n,k<\omega$, define $xF^n_kz$ iff
 \begin{itemize}
 \item
 for all $n'\geq n$, $f(x)\upharpoonright I_{n'} = f(z)\upharpoonright I_{n'}$, and
 \item
 setting $\bar{y} = f(z)\upharpoonright I_n$, we have $x\mathbin{E_{\bar{y}\upharpoonright k}^{\Gamma_n}}z$.
\end{itemize}
Then $\{ F^n_k\mid k<\omega\}$ is an increasing sequence of finite equivalence relations.

Next we check that $\bigcup_{k,n} F^n_k=E$. Suppose that
$xEz$. Then for some $n\in\omega$, $x\mathbin{E_{\Gamma_n}}z$, and $D^{\Gamma_n}_{x,z}\in M[x]=M[z]$. By increasing $n$ if necessary, we may assume that for all  $n'\geq n$, $f(x)\upharpoonright I_{n'} = f(z)\upharpoonright I_{n'}$. Let  $\bar{y} = f(x)\upharpoonright I_n$. Since $f(x)$ is $M[x]$-generic and $I_n$ is infinite, by properties of the Cohen forcing, we have that  $\bar{y}$ is $M[x]$-generic. So for some $k$, $\bar{y}\upharpoonright k\in D^{\Gamma_n}_{x,z}$. It follows that $xF^n_kz$.

Finally, we  check that $\bigcup_k F^n_k=\bigcup_k F^{n+1}_k$:
suppose that for some $k$, $xF^n_kz$. It follows that for all $n'\geq n$, $f(x)\upharpoonright I_{n'} = f(z)\upharpoonright I_{n'}$ and that $x\mathbin{E_{\Gamma_n}}z$. Hence $x\mathbin{E_{\Gamma_{n+1}}}z$. By a similar argument as above we get that for some $k'$, $x\mathbin{F^{n+1}_{k'}}z$.
\end{proof}

\begin{remark}
Edward Hou recently showed that a partial converse to the above proposition is also true \cite{Hou}. Namely, he proved that if $E$ is hyperfinite, then one can find a function as in the assumption of the proposition.
\end{remark}

\begin{proposition}
Suppose that $y\in \omega^\omega$ and $x\in X$ are $M$-mutually generic. (I.e. $y\times x$ is $M$-generic for $\omega^{<\omega}\times\mathbb{P}$.) Suppose also that $\{g_n\mid n<\omega\}\in M[y]$ are Borel involutions, such that $\equiv^\mathbb{P}_M=\bigcup_n \graph(g_n)$. Then $[x]_{\equiv^\mathbb{P}_M} = [x]_{E_y}$. Namely,  $\equiv^\mathbb{P}_M$ restricted to $\Gen(\mathbb{P}, M[y])$ is hyperfinite. 
\end{proposition}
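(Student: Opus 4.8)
I would first note that one inclusion is free. Each involution $g_n$ has $\graph(g_n)\subseteq\equiv^\mathbb{P}_M$, hence so does each $f_n(w)=\min_<\{w,g_n(w)\}$ and every finite composition of these, so $x\mathbin{E_y}z$ already gives $M[x]=M[z]$; thus $[x]_{E_y}\subseteq[x]_{\equiv^\mathbb{P}_M}$. All the content lies in the reverse inclusion $[x]_{\equiv^\mathbb{P}_M}\subseteq[x]_{E_y}$. Granting it, the last sentence is immediate: $E_y=\bigcup_k E_{y\restriction k}$ is a hyperfinite Borel relation coded in $M[y]$ which we will have shown to agree with $\equiv^\mathbb{P}_M$ on the class of every $M[y]$-generic $x$, so $\equiv^\mathbb{P}_M$ restricted to $\Gen(\mathbb{P},M[y])$ is hyperfinite.

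Next I would reduce everything to a statement internal to $M[y]$. Fix $z$ with $M[z]=M[x]$. As recalled at the beginning of Section~2 there is $e\in\Aut^M(\overline{\mathbb{P}})$ with $e''x=z$; since $x$ is $M[y]$-generic and $e\in M\subseteq M[y]$, the image $z=e''x$ is again $M[y]$-generic and $M[y][x]=M[y][z]$. Hence the involutions $\{g_n\}$, the order $y$, the group $\Aut^M(\overline{\mathbb{P}})$ and the relations $E_{y\restriction k}$ all lie in $N:=M[y]$, and it suffices to show: for every $e\in\Aut^M(\overline{\mathbb{P}})$ and every $\mathbb{P}$-generic $x$ over $N$ one has $x\mathbin{E_y}e''x$, i.e.\ $C_k(x)=C_k(e''x)$ for some $k$, where $C_k=f_{y(k-1)}\circ\dots\circ f_{y(0)}$.

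The engine is the density of merging. For $\equiv^\mathbb{P}_M$-related $a,b$ the set $D_{a,b}=\{s\in\omega^{<\omega}:a\mathbin{E_s}b\}$ is dense open in $(\omega^{<\omega},\supseteq)$: given $s$, run the cascade along $s$ to get $a',b'$; if $a'=b'$ we are done, and otherwise $a'\equiv^\mathbb{P}_M b'$, so since $\bigcup_n\graph(g_n)=\equiv^\mathbb{P}_M$ there is $n$ with $g_n(a')=b'$, and then $f_n$ sends both $a',b'$ to $\min_<\{a',b'\}$, so $s\cat n\in D_{a,b}$. Thus $x\mathbin{E_y}e''x$ is exactly the assertion that the one fixed sequence $y$ meets $D_{x,e''x}$.

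The heart of the matter, and the step I expect to be the main obstacle, is to show that this particular $y$ does meet $D_{x,e''x}$. One cannot simply invoke ``a generic meets every ground-model dense set'': the involutions generating $\equiv^\mathbb{P}_M$ lie in $M[y]$, so $D_{x,e''x}$ belongs only to $M[x][y]$, over which $y$ is not generic; equivalently, $y$ plays the double role of supplying the involutions \emph{and} serving as the order, so the dense set to be met is defined from $y$ itself. I would break this self-reference by arguing in the product $\mathbb{P}\times(\omega^{<\omega},\supseteq)$ over $M$, using that $(x,y)$ is generic for it and that the $g_n$ have names in $M$: below any condition one extends, alternately refining the $\mathbb{P}$-part to pin down the current cascade values $C_k(\dot x),C_k(e''\dot x)$ and refining the order-part to append an index identifying them, the density of $D_{a,b}$ guaranteeing such an index whenever the two values differ. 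The delicate point, on which the construction must be organized carefully, is the bookkeeping: pinning down a cascade value commits further coordinates of $y$, i.e.\ future order-indices, so one must interleave the two refinements so that an identifying index can still be placed at a not-yet-committed coordinate. Once this fusion is arranged, so that it is forced that the fixed $y$ itself, and not merely some tailor-made order, identifies $\dot x$ with $e''\dot x$, the equality $[x]_{\equiv^\mathbb{P}_M}=[x]_{E_y}$ holds for every $M[y]$-generic $x$, and the proposition follows.
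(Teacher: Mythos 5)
Your preliminary steps agree with the paper's: the easy inclusion $[x]_{E_y}\subseteq[x]_{\equiv^\mathbb{P}_M}$, the reduction of the hard inclusion to pairs $(x,e''x)$ with $e\in\Aut^M(\overline{\mathbb{P}})$, the density of the merge sets $D_{a,b}=\{s : a\mathbin{E_s}b\}$ via the cascade, and the passage from the equality of classes to hyperfiniteness on $\Gen(\mathbb{P},M[y])$ via the product lemma. Where you part ways with the paper is precisely at the crux. The paper's proof of $[x]_{\equiv^\mathbb{P}_M}\subseteq[x]_{E_y}$ is a three-line mutual-genericity argument: for $z\mathbin{E}x$ it asserts $D_{x,z}\in M[z]$, observes that $y$ is $M[z]$-generic (because $M[z]=M[x]$ and $y,x$ are mutually generic), and concludes that some $s\sqsubset y$ lies in $D_{x,z}$, so $z\in[x]_{E_s}\subseteq[x]_{E_y}$. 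There is no product-forcing analysis and no fusion anywhere.

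You instead declare that route blocked --- since the $g_n$ lie in $M[y]$, the set $D_{x,z}$ lives only in $M[y][x]$, over which $y$ is not generic --- and you substitute a fusion argument in $\omega^{<\omega}\times\mathbb{P}$. That substitute is where your proposal has a genuine gap: the fusion is never carried out, and the ``delicate bookkeeping'' you defer is the entire content of the claim. As described it also cannot work literally: the cascade values $C_k(\dot x)$ are $\equiv^\mathbb{P}_M$-equivalent to $\dot x$, hence are themselves names for $M[y]$-generic objects, and no condition can ``pin them down''; what a condition can decide is a merging index $n$ for the current pair, but deciding such a statement in general requires extending the $\omega^{<\omega}$-coordinate, which commits further cascade steps and changes the pair --- exactly the tail-chasing you acknowledge and never resolve. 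Nor can any soft principle rescue the step: a Cohen real $y$ meets no element of the dense open set $\{s : s\not\sqsubset y\}\in M[y]$, so ``genericity for dense sets defined from $y$ itself'' is false in general, and a correct argument would have to exploit the specific structure of the cascade, which your sketch does not. To your credit, the obstruction you isolate is a fair reading of the hypothesis as literally stated: if the $g_n$ genuinely need the parameter $y$, then (since mutual genericity forces any real common to $M[y]$ and $M[x]$ into $M$) the paper's own assertion $D_{x,z}\in M[z]$ is not justified either. But the reading under which the proposition has the intended proof is the paper's: the involutions, hence the cascade machinery and $D_{x,z}$, must be computable in $M[x]=M[z]$ (in effect, not depend on $y$), and then the direct mutual-genericity argument --- all of whose ingredients you already assembled --- finishes the proof with no fusion at all.
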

\begin{proof}
As above, denote $\equiv^\mathbb{P}_M$ by \(E\). The inclusion is $[x]_E \supseteq [x]_{E_y}$ is clear. For the nontrivial direction, suppose $z\in X$ is such that $z\mathbin{E}x$. Then $y$ is $M[z]$ generic (as $M[x]=M[z]$), $D_{x,z}\in M[z]$, and so for some $s\in D_{x,z}$, $s\sqsubset y$. It follows that $z\in[x]_{E_s}\subseteq [x]_{E_y}$. 
The hyperfiniteness of $E$ restricted to $\text{Gen}(\mathbb{P}, M[y])$ follows since every  $\text{Gen}(\mathbb{P}, M[y])$ is mutually generic with $y$.
\end{proof}

Next we look at the Cohen forcing $\mathbb{C}=\Add(\omega, 1)$ to add one real. %We note that in \cite[Theorem~3.1]{Smy22}, it was shown that  $\equiv_{M}^\mathbb{C}$ is hyper-hyperfinite, i.e. an increasing union of hyperfinite equivalence relations.
Recall that this poset is isomorphic to  $(\omega^{<\omega}, \supseteq)$. Let $h\colon 2^{<\omega}\rightarrow\omega^{<\omega}$ code that isomorphism, $h\in M$. Note that then for any $M'\supseteq M$, $x\in 2^\omega$ is $M'$-generic iff $h"x$ is $M'$-generic. In slight abuse of notation we use $h(x)$ to denote $h"x$.

Let \(\mathrm{evens}, \mathrm{odds}\) denote the sets of even and odd natural numbers respectively. For $x\in 2^\omega$, let  $x_l = x\upharpoonright \mathrm{evens}$
and  $x_r = x\upharpoonright \mathrm{odds}$.
Note that if $x$ is $M$-generic, then both $x_l$ and $x_r$ are also $M$-generic and moreover, they are mutually generic, i.e., $x_l$ is $M[x_r]$-generic and vice versa.
Finally, Let $h\colon2^{\mathrm{evens}}\to\omega^\omega$ be some isomorphism in $M$. Note that if $x\in 2^{\mathrm{evens}}$ is $M$-generic, and $y=h(x)$, then so is $y$.

Let \(E\) denote \(\equiv_M^{\mathbb{C}}\), By Feldman-Moore, $E$ is induced by a countable group generated by some Borel involutions $\{g_n\mid n<\omega\}$. Let $N\supset M$ be a countable model, such that the
$\{g_n\mid n<\omega\}\in N$.

Define the following  subequivalence relations of $E$ (both defined on $X$):
\begin{itemizenew}
\item 
$x\mathbin{E^*}z$ iff $x_l=z_l$ and $x_r E z_r$. 
\item
$x E^N z$ iff $x=z$ or
$x Ez$,  $x_l=z_l$, and $x$ is $N$ -generic.
\end{itemizenew}
Note that if there is a generic \(\equiv_M^\mathbb{P}\)-invariant uniformization, then $E\leq_B E^*$.

\begin{lemma}
$E^{\text{N}}$ is hyperfinite.
\end{lemma}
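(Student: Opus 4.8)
The plan is to localize the nontrivial part of $E^{N}$ to the $N$-generic reals and then to run the cascade of the construction preceding Theorem~\ref{thm : invariant function}, using a single Cohen real placed inside $N$ as a universal indexing real.

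First I would set $X_N=\{x\in X: x\text{ is }N\text{-generic}\}$. Since $N$ is countable, $X_N$ is a comeager $G_\delta$ --- the intersection of the open dense sets $\{x:x\text{ meets }D\}$ as $D$ ranges over the dense subsets of $\mathbb{C}$ lying in $N$ --- hence Borel, and $X_N\subseteq X$ because $N\supseteq M$. If $x\mathbin{E}z$ then $z=\sigma\cdot x$ for some $\sigma\in\Aut^M(\overline{\mathbb{C}})\subseteq N$, so $z$ is $N$-generic iff $x$ is; thus $X_N$ is $E$-invariant. By the definition of $E^{N}$, off $X_N$ the relation $E^{N}$ is equality, so on the invariant Borel partition $\{X_N,\,X\setminus X_N\}$ we have $E^{N}=\Delta\sqcup\bigl(E^{N}\restriction X_N\bigr)$. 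As hyperfiniteness passes to Borel subequivalence relations and is preserved when recombining over an invariant Borel partition, and since $E^{N}\restriction X_N\subseteq E\restriction X_N$, it suffices to prove that $E\restriction X_N$ is hyperfinite.

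For this I would choose $N$ to contain, beside the involutions $\{g_n\}$, a real $c$ that is Cohen-generic over $M$; this is harmless, as $N$ is only required to contain the involutions. Putting $c$ inside $N$ turns it into a universal indexing real: if $x\in X_N$ then $x$ is generic over $N\supseteq M[c]$, so $c$ and $x$ are mutually $M$-generic and $c$ is Cohen over $M[x]$. Hence for every $x\in X_N$ and every $z$ in its $E$-class --- where $M[z]=M[x]$ --- the one real $c$ is generic over the single model $M[x]=M[z]$. I would then feed $c$ into the cascade, driving it by the \emph{level} involutions generating each $E_{\Gamma_n}$, which lie in $M$, so that the dense sets $D^n_{x,z}$ (and the comeager sets $C_x$) lie in $M[x]=M[z]$ rather than in $N[x]$. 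Mirroring the proof of Theorem~\ref{thm : invariant function} with the constant $c$ in place of the uniformization $g(x)$, I set $x\mathbin{E_n}z\iff x\mathbin{E}z\wedge x\mathbin{E^{n}_{c}}z$: each $E_n$ is hyperfinite, the $E_n$ increase, and whenever $x,z\in X_N$ satisfy $x\mathbin{E_{\Gamma_n}}z$ the genericity of $c$ over $M[x]=M[z]$ forces $c$ to meet $D^n_{x,z}$, so $x\mathbin{E^{n}_{c}}z$. Therefore $E\restriction X_N=\bigcup_n\bigl(E_n\restriction X_N\bigr)$ is hyperfinite, and by the reduction above $E^{N}$ is hyperfinite.

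The crux --- and the only real obstacle --- is producing one indexing real that is simultaneously Cohen over $M[x]$ for every $N$-generic $x$; this is precisely what placing a Cohen real $c$ inside $N$ accomplishes, via the mutual genericity of $c$ and each $x\in X_N$. It is equally essential to drive the cascade by the $M$-level involutions rather than by the full Feldman--Moore involutions $\{g_n\}\in N$: the latter would put the stabilizing dense sets $D_{x,z}$ into $N[x]$, over which $c\in N$ is not generic, so the cascade would fail to stabilize.
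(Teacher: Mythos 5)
Your reduction to $X_N$ is sound as far as it goes ($X_N$ is Borel and $E$-invariant, $E^N$ is equality off $X_N$, and hyperfiniteness passes to Borel subequivalence relations and across invariant Borel partitions), and mutual genericity is indeed the right mechanism. But there is a genuine gap exactly at what you call the crux. The lemma is stated for an \emph{arbitrary} countable model $N\supset M$ containing the involutions $\{g_n\}$, and the relation $E^N$ depends on $N$; you are not free to ``choose $N$ to contain, beside the involutions, a Cohen real $c$ over $M$.'' Replacing the given $N$ by a model $N'$ with more reals changes the statement being proved: since $X_{N'}\subseteq X_N$, one gets $E^{N'}\subseteq E^N$, and hyperfiniteness of a subrelation says nothing about the larger relation. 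So your argument, as written, proves the lemma only for specially chosen $N$ (in effect it re-proves the paper's earlier proposition on $M$-mutually generic pairs, with $N\supseteq M[c]$ playing the role of $M[y]$ there), not for the $N$ you were handed. To run your route for the given $N$ you would need to produce, from the hypotheses alone, a single real $c$ that is Cohen over $M[x]$ for \emph{every} $x\in X_N$; your only method of producing such a $c$ is to place it inside $N$, which is unavailable, and it is far from clear that an arbitrary admissible $N$ contains, or even admits, such a universal indexing real.

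The paper's proof avoids this obstacle entirely, and the reason is the clause $x_l=z_l$ in the definition of $E^N$ --- precisely the structure that your passage from $E^N\restriction X_N$ to the larger relation $E\restriction X_N$ discards. Since $E^N$-related points share their left half $w=x_l=z_l$, the real $y=h(w)$ is itself an invariant indexing real, and its genericity is free of charge: $x$ being $N$-generic makes $x_l$ and $x_r$ mutually generic over $N$, so $y$ is $N[x_r]$-generic, while the cascade's dense sets $D_{x_r,z_r}$ (built from $\{g_n\}\in N$, noting $z_r\in N[x_r]$) lie in $N[x_r]$. The paper then runs the cascade on the right halves, driven by $y\restriction n$, obtaining increasing finite relations whose union is $E^N$, with no extra hypothesis on $N$ whatsoever. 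In short: your constant real $c$ must be imported from outside the given data, whereas the paper manufactures its indexing real from $x$ itself; that is exactly why the paper's argument covers every admissible $N$ and yours does not.
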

\begin{proof}
We first make some observations. First note that for every $x\in X$, $x_l$ and $x_r$ are mutually generic. Suppose now that $xE^N z$, $x\neq z$, and denote $w=x_l=z_l$. Then $M[x] = M[w][x_r]=M[w][z_r]=M[z]$. So $x_r \equiv_{M[w]}^{\mathbb{P}}z_r$. By assumption $x$ and therefore $z$ are also $N$-generic. It follows that $x_r, z_r$ are $N$-generic, and $w$ is $N[x_r] = N[z_r]$-generic.

Define a sequence of equivalence relations $E_n, n<\omega$ as follows. Set 
\[x\mathbin{E}_n z \qquad\iff\qquad x\mathbin{E}^Nz \text{ and }x_r \mathbin{E}_{y\upharpoonright n} z_r,\]
for \(y=h(x_l)\).
Then the \(E_n\)'s are increasing finite equivalence relations. To show that their union is $E^N$, suppose $x\mathbin{E}^Nz$. Let   $w=x_l=z_l$ and $y=h(w)$. Then the dense set $D_{x_r, z_r}$ is in $N[x_r]$ and by mutual genericity, $y$ is $N[x_r]$-generic. It follows that  $y\upharpoonright n\in D_{x_r, z_r}$ for some $n$, and so $x_r \mathbin{E}_{y\upharpoonright n} z_r$.
\end{proof}

We end this section with some questions, that may be stepping stones for settling whether equivalence of generics is hyperfinite for the Cohen forcing and other posets.

\begin{question}
    For which posets $\mathbb{P}$, is there a generic \(\equiv^\mathbb{P}_M\)-invariant uniformization? Is there such a uniformization for the Cohen poset?
\end{question}

Note that by Corollary~\ref{cor : Random} there is no invariant mutually generic function for the Random forcing.  On the other hand, if  $\equiv_M^{\mathbb{P}}$ is smooth, then one can define such a function:

\begin{proposition}
     Suppose that  $\equiv_M^{\mathbb{P}}$ is smooth. Then there is a  generic \(\equiv^\mathbb{P}_M\)-invariant uniformization.
\end{proposition}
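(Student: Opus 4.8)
The plan is to use smoothness to extract a Borel selector for $E := \equiv^\mathbb{P}_M$ and then, over each selected representative, to manufacture a Cohen real in a Borel-uniform fashion. Write $X = \Gen(M,\mathbb{P})$. Since $E$ is a smooth countable Borel equivalence relation, it admits a Borel selector $T\colon X\to X$, i.e.\ a Borel map with $T(x)\mathbin{E}x$ for all $x$ and $T(x)=T(y)$ whenever $x\mathbin{E}y$ (equivalently, $E$ has a Borel transversal; this is the standard characterization of smoothness for countable Borel equivalence relations). Because $T(x)\mathbin{E}x$ we have $M[T(x)]=M[x]$, and because $T$ is $E$-invariant any function $f=\Psi\circ T$ is automatically $E$-invariant. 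Thus the whole problem reduces to producing a Borel map $\Psi\colon X\to\omega^\omega$ with the single property that $\Psi(z)$ is Cohen-generic over $M[z]$ for every $z\in X$: then $f(x):=\Psi(T(x))$ is Borel, $E$-invariant, and $f(x)$ is generic over $M[T(x)]=M[x]$, which is exactly a generic $\equiv^\mathbb{P}_M$-invariant uniformization.

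To build $\Psi$, first fix a Borel enumeration $(\tau_m)_{m<\omega}$ of the (countably many) $\mathbb{P}$-names belonging to the fixed countable model $M$, so that $(\tau_m^{\,z})_m$ ranges over all of $M[z]$. For a generic $z\in X$, the definability of forcing over $M$ gives
\[
s \in \tau_m^{\,z} \iff \exists p\ \bigl(p \in z \ \text{and}\ p \Vdash_{\mathbb{P}}^{M} \check{s} \in \tau_m\bigr),
\]
and since $p\in z$ is the clopen condition $z\in N_p$ while the forcing relation is a fixed object of $M$, the set $\{(z,s):s\in\tau_m^{\,z}\}$ is Borel; likewise $\{z:\tau_m^{\,z}\text{ is a dense open subset of }(\omega^{<\omega},\supseteq)\}$ is Borel. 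Setting $D_m^z:=\tau_m^{\,z}$ when the latter is dense open and $D_m^z:=\omega^{<\omega}$ otherwise, the family $(D_m^z)_m$ Borel-enumerates all the dense open subsets of Cohen forcing lying in $M[z]$. I then construct $\Psi(z)$ by a greedy meet-the-dense-sets recursion made Borel by always choosing least witnesses: fix a Borel bijection $\omega^{<\omega}\cong\omega$, set $t_0=\varnothing$, and let $t_{m+1}$ be the least $t\supseteq t_m$ with $t\in D_m^z$ (existing by density and depending Borel-measurably on $z$), interleaving the trivial dense sets $\{t:\lh(t)\geq k\}$ so that the union stays total. Then $\Psi(z)=\bigcup_m t_m$ is a Borel map into $\omega^\omega$ meeting every $D_m^z$, hence a Cohen real over $M[z]$.

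The main obstacle, and the step demanding the most care, is the Borel-uniform decoding of $M[z]$: one must verify that $p\Vdash_{\mathbb{P}}^{M}\check{s}\in\tau_m$ is read off uniformly so that ``$s\in\tau_m^{\,z}$'' and ``$\tau_m^{\,z}$ is dense open'' are genuinely Borel in $z$. This is precisely where the definability of forcing, together with the fact that $M$ is a fixed countable model whose names can be listed in advance, does the work; once it is in place, the least-witness recursion and the composition $f=\Psi\circ T$ are routine. I would also note that if one drops the Borel requirement then the statement is immediate from $\AC$, since $M[x]$ depends only on the $E$-class of $x$ and one may simply choose a Cohen real over it class-by-class; so the real content of the proposition is the production of a \emph{Borel} such $f$, which the selector-plus-least-witness argument delivers.
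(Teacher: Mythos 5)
Your proof is correct and follows essentially the same route as the paper's: both use smoothness to obtain a Borel selector, enumerate in $V$ the countably many $\mathbb{P}$-names in $M$ for dense subsets of Cohen forcing, and run a least-witness recursion on the names evaluated at the selected representative, so that the resulting Cohen real over $M[x]$ depends only on the $E$-class. Your extra care in verifying Borelness of name evaluation via the definability of forcing, and in filtering out names whose evaluations fail to be dense open, only makes explicit details the paper leaves implicit.
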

\begin{proof}
    Let $f\colon\Gen(\mathbb{P}, M) \rightarrow \Gen(\mathbb{P}, M)$ be a Borel selector, and let $\langle\dot{D}_n\mid n<\omega\rangle$ enumerate all $M$-names for dense sets, and finally let $\prec$ be some wellorder of $\mathbb{P}$. Define $g\colon \Gen(\mathbb{P}, M) \rightarrow \Gen(\mathbb{P}, M)$ as follows. For each $x\in \Gen(\mathbb{P}, M)$, inductively construct $\{p_n\mid n<\omega\}$, so that each $p_n$ is the $\prec$-least condition such that $p_n\leq p_{n-1}$ and $p_n\in \dot{D}_n[f(x)]$. Then set $g(x) = \{p\mid \exists n\, (p_n\leq p)\}$. Then $g$ is the desired generic \(\equiv^\mathbb{P}_M\)-invariant uniformization.
\end{proof}

\begin{question}
    Suppose that $\equiv_M^{\mathbb{P}}$ is hyperfinite. Must there be generic $\equiv^\mathbb{P}_M$-invariant uniformization?
\end{question}

\newcommand{\etalchar}[1]{$^{#1}$}


\begin{thebibliography}{CJM{\etalchar{+}}23}

\bibitem[CC24]{CalCla23'}
Filippo Calderoni and Adam Clay.
\newblock Condensation and left-orderable groups.
\newblock {\em Proc. Amer. Math. Soc. Ser. B}, 11:579 -- 588, 2024.

\bibitem[CJM{\etalchar{+}}23]{CJMSTD}
Clinton~T. Conley, Steve~C. Jackson, Andrew~S. Marks, Brandon~M. Seward, and Robin~D. Tucker-Drob.
\newblock {Borel asymptotic dimension and hyperfinite equivalence relations}.
\newblock {\em Duke Mathematical Journal}, 172(16):3175 -- 3226, 2023.

\bibitem[DJK94]{DouJacKec}
Randall Dougherty, Stephen Jackson, and Alexander~S. Kechris.
\newblock The structure of hyperfinite borel equivalence relations.
\newblock {\em Transactions of the American Mathematical Society}, 341(1):193--225, 1994.

\bibitem[FSV24]{FriShiVid}
Joshua Frisch, Forte Shinko, and Zoltan Vidnyanszky.
\newblock Hyper-hyperfiniteness and complexity.
\newblock Preprint available on the arXiv at \url{https://arxiv.org/abs/2409.16445}, 2024.

\bibitem[Gao09]{Gao}
Su~Gao.
\newblock {\em Invariant descriptive set theory}, volume 293 of {\em Pure and Applied Mathematics (Boca Raton)}.
\newblock CRC Press, Boca Raton, FL, 2009.

\bibitem[GKK20]{GitKanKoe}
Moti Gitik, Vladimir Kanovei, and Peter Koepke.
\newblock Inter- mediate models of prikry extensions.
\newblock Preprint, available at \url{https://www.researchgate.net/publication/228378916_Intermediate_models_of_Prikry_generic_extensions}, 2020.

\bibitem[Hou]{Hou}
Edward Hou.
\newblock Personal communication.

\bibitem[Jec03]{Jec}
Thomas Jech.
\newblock {\em Set theory}.
\newblock Springer Monographs in Mathematics. Springer-Verlag, Berlin, 2003.
\newblock The third millennium edition, revised and expanded.

\bibitem[Kec24]{Kec24}
Alexander~S. Kechris.
\newblock {\em The Theory of Countable Borel Equivalence Relations}.
\newblock Cambridge Tracts in Mathematics. Cambridge University Press, 2024.

\bibitem[KM04]{KecMil}
Alexander~S. Kechris and Benjamin Miller.
\newblock {\em Topics in orbit equivalence relations}.
\newblock Lecture notes in Mathematics. Springer-Verlag Berlin Heidelberg, 2004.

\bibitem[KW25]{KecWol}
Alexander~S. Kechris and Michael Wolman.
\newblock Invariant uniformization.
\newblock Preprint available on the arXiv at \url{https://arxiv.org/abs/2405.15111}, 2025.

\bibitem[NV25]{NarVac}
Petr Naryshkin and Andrea Vaccaro.
\newblock Hyper-u-amenablity and hyperfiniteness of treeable equivalence relations.
\newblock Preprint available on the arXiv at \url{https://arxiv.org/abs/2507.07891}, 2025.

\bibitem[Smy22]{Smy22}
Iian~B. Smythe.
\newblock Equivalence of generics.
\newblock {\em Arch. Math. Log.}, 61(5–6):795–812, jul 2022.

\end{thebibliography}
\end{document}